\newtheorem{thm}{Theorem}[section]
\newtheorem{rmk}{Remark}[section]
\newtheorem{lma}{Lemma}[section]
\newtheorem{assu}{Assumption}[section]
\newcommand{\norm}[1]{\left| \! \left| #1\right| \!\right|}
\newcommand{\bnorm}[1]{\big| \! \big| #1\big| \!\big|}
\newcommand{\Xs}{\mathcal{X}}
\newcommand{\Ys}{\mathcal{Y}}
\newcommand{\ip}[1]{\left< #1 \right>}
\newcommand{\bip}[1]{\big< #1 \big>}
\newcommand{\dom}{\mathscr{D}}
\newcommand{\sm}[1]{\left[ \begin{smallmatrix} #1 \end{smallmatrix} \right]}
\newcommand{\bm}[1]{ \begin{bmatrix} #1 \end{bmatrix}}
\numberwithin{equation}{section}
\font\eka=cmex10
\def\ind{\mathrel{\hbox{\rlap{%
\hbox to 7.5pt{\hrulefill}}\raise6.6pt\hbox{\eka\char'167}}}}
\begin{document}
\title[Iterative state and parameter estimation]{Iterative observer-based state and parameter estimation for linear systems*\footnote{*M\lowercase{anuscript submitted for publication.}}}


\author[Atte Aalto]{Atte Aalto \\ \\ I\lowercase{nria}, U\lowercase{niversit\'e} P\lowercase{aris}--S\lowercase{aclay}, P\lowercase{alaiseau}, F\lowercase{rance}; M$\Xi$DISIM \lowercase{team}}

\thanks{Address: 1 rue Honor\'e d'Estienne d'Orves; 91120 Palaiseau, FR. \\ \indent Email: atte.ej.aalto@gmail.com}

\begin{abstract}
We propose an iterative method for joint state and parameter estimation using measurements on a time interval $[0,T]$ for systems that are backward output stabilizable. Since this time interval is fixed, errors in initial state may have a big impact on the parameter estimate. We propose to use the back and forth nudging (BFN) method for estimating the system's initial state and a Gauss--Newton step between BFN iterations  for estimating the system parameters. Taking advantage of results on the optimality of the BFN method, we show that for systems with skew-adjoint generators, the initial state and parameter estimate minimizing an output error cost functional is an attractive fixed point for the proposed method. We treat both linear source estimation and bilinear parameter estimation problems.

\medskip

\noindent
{\it Keywords: Parameter estimation, system identification, back and forth nudging, output error minimization}

\smallskip
\noindent
{\it 2010 AMS subject classification: 93B30, 35R30, 93C05} 
\end{abstract}

\maketitle


\section{Introduction}

In this paper, we present a method for estimating system parameters from noisy measurements on a given time interval $[0,T]$. For this purpose, we develop an iterative modification of the sequential joint state and parameter estimation method proposed in \cite{Moireau08}, based on using a Luenberger observer for estimating the state trajectory and a (extended) Kalman filter-based estimator for the system parameters. As we now consider estimation over a fixed time interval, any errors in the initial state of the system may be crucial. Therefore, we use the so called \emph{back and forth nudging} (BFN) method proposed in \cite{AB05,AB08} for initial state estimation, alongside with a Gauss--Newton step between each iteration to estimate the parameter. This is motivated by the fact that the extended Kalman filter is equivalent to a Gauss-Newton optimization step on a suitably chosen cost function. Using the results of \cite{OEBFN} on the optimality of the BFN method, in case the system dynamics are governed by a skew-adjoint main operator, the proposed strategy can be regarded as a hybrid optimization method with separate optimization schemes for the initial state and the system parameters. The method can be used also in the case the main operator is not skew-adjoint as long as the BFN method is stable. However, the optimality results do not hold in that case.

We treat both linear source estimation problems and bilinear parameter estimation problems. Let us introduce the linear source estimation case now to be able to better explain the results of the article. The bilinear case will be presented and treated in Section~\ref{sec:bilin}. In the source estimation problems we assume that the system dynamics are given by
\begin{equation} \label{eq:lin_syst}
\begin{cases}
\dot z=Az+B\theta+\eta, \\
y=Cz+\nu, \\
z(0)=z_0.
\end{cases}
\end{equation}
Here $\eta$ represents the unknown modeling error and input noise and $\nu$ represents output noise and modeling errors related to the measurement. Now $\theta \in \Theta$ is the parameter that we are interested in, and  $B(\cdot) \in L^2(0,T; \mathcal{L}(\Theta,\Xs))$. The state space $\Xs$ and the parameter space $\Theta$ are assumed to be separable Hilbert spaces.

The idea is to estimate the initial state $z_0$ and the parameter $\theta$ of system \eqref{eq:lin_syst} by minimizing the (regularized) output error discrepancy cost function, defined by
\begin{equation} \label{eq:cost_main}
J(\xi,\zeta):=\ip{\xi-\theta_0,U_0(\xi-\theta_0)}+\int_0^T \norm{y-C\hat z[\xi,\zeta]}_{\Ys}^2d\tau
\end{equation}
where
\begin{equation} \label{eq:obs_main}
\begin{cases}
\dot{\hat z}[\xi,\zeta]=A\hat z[\xi,\zeta]+B\xi+\kappa C^*(y-C\hat z[\xi,\zeta]), \\
\hat z[\xi,\zeta](0)=\zeta.
\end{cases}
\end{equation}
The parameter $\theta_0$ represents our prior knowledge of the parameter and the self-adjoint and positive operator $U_0$ is chosen based on our confidence on the prior. The inner product term can also be interpreted as a Tikhonov regularization term.  We shall treat the minimization problem both with the feedback term in the dynamics of $\hat z$ (that is, with $\kappa>0$), and without the feedback term ($\kappa=0$). We remark that in case the model is erroneus, the minimization problem with $\kappa >0$ may produce better parameter estimates than the traditional output error minimization approach (that is, $\kappa=0$). This is demonstrated by a simple example in Section~\ref{sec:fb}. For the bilinear problem, the system and the cost function are defined in Section~\ref{sec:bilin}.

The theoretical results of this paper are concerned with systems with skew-adjoint generators $A$ and bounded observation operators $C$, under the rather strong exact observability assumption, but the developed method can be used in more general situations. In the main results, we shall show that the minimizer of the output error cost function \eqref{eq:cost_main} is an attractive fixed point for the presented method. In the linear parameter estimation case this can be shown with any noise processes. In addition, the method's domain of convergence is infinite. In the bilinear parameter estimation problem, there are some restrictions on the noise processes and the method's domain of convergence is finite.

One topic that is not treated in this article is the identifiability of the parameter. The reason for this exclusion is that it is difficult to state any results that would hold in a wide variety of scenarios. Consider the "extreme" case that there exists a parameter $\tilde\theta \in \Theta$ for which $B(t)\tilde\theta=0$ for all $t \in [0,T]$. Then obviously no information on the corresponding component of the parameter will ever be obtained from the output $y$ and so because of the regularization term in the cost function \eqref{eq:cost_main}, the minimizer of $J$, denoted by $\theta^o$, satisfies $\bip{\theta^o,\tilde\theta}=\bip{\theta_0,\tilde\theta}$. For results on the identifiability of parameters and persistance of excitation of systems, we refer to \cite{Baumeister,Levanony,Shimkin}.

As mentioned, the proposed method can be regarded as an iterative, finite time horizon modification of the joint state and parameter estimation method presented in \cite{Moireau08}. Other variants of the method are treated in \cite{MoireauHinf} and \cite{Hinf_old} where an $H^{\infty}$ criterion is minimized, and \cite{ROUKF}  where an Unscented Kalman filter (UKF) based strategy is used for the parameter estimation. An application of this method on a cardiac model has been presented in \cite{cardiacUKF} by Marchesseau \emph{et al}. Let us also refer to \cite{Dual_KF} by Mariani and Corigliano for a joint state and parameter estimator utilizing two separate but connected Kalman estimators for the state and parameter. It should be noted that the UKF strategy could also be used together with a BFN approach for initial state estimation. At least the results on the linear case in this paper would hold also for this approach. In addition, it is somewhat easier to implement since one does not need to compute the parameter-state sensitivity operator analytically (cf. EKF vs. UKF). All these methods are also closely related to the classical strategy of using the Extended Kalman Filter on the augmented state vector that contains the actual system state and the parameter vector. Convergence analysis for this strategy is presented in \cite{Ljung_EKF} by Ljung. However, if the system dimension is high, the Kalman-based strategy becomes very costly. Indeed, the EKF requires manipulations of the matrix Riccati equation that has the size of the sum of the state space and parameter space dimensions. In contrast, implementation of the Luenberger observer does not really increase the computational cost of the numerical model. An alternative strategy is based on Monte Carlo approach. A joint state-parameter estimator based on particle filtering is proposed in \cite{ParticleSHM} by Chatzi and Smyth.
See also \cite{Erazo} by Erazo for a review on numerous references and experimental results on different Bayesian approaches for state/parameter estimation in structural health monitoring. 





\section{Background and preliminary results}

\subsection{Observers and the BFN method}



Let us first recall some features of Luenberger observers (\cite{Luenberger},\cite[Section~5.3]{CZ}). The basic idea is to correct the observer state dynamics using a correction term that depends linearly on the output discrepancy. That is, assume that the system dynamics are given by
\begin{equation} \label{eq:system}
\begin{cases}
\dot z = Az+f+\eta, \\
z(0)=z_0, \\
y=Cz+\nu
\end{cases}
\end{equation}
where $f$ is a known load term and $\eta$ represents input noise and modeling errors. The system output is $y$ and $\nu$ is the output noise.  The Luenberger observer dynamics are then given by
\begin{equation} \label{eq:observer}
\begin{cases}
\dot{\hat z}=A\hat z+f+K(y-C\hat z), \\
\hat z(0)=\zeta,
\end{cases}
\end{equation} 
where $K \in \mathcal{L}(\Ys,\Xs)$ is the observer feedback operator and $\zeta$ is our initial state estimate. By superposition, the estimation error $\varepsilon:=z-\hat z$ satisfies
\[
\dot\varepsilon=(A-KC)\varepsilon+\eta+K \nu, \qquad \varepsilon(0)=z_0-\zeta.
\]
The challenge in the observer design is to find the feedback operator such that the closed loop system with the semigroup generator $A-KC$ is stable.

We shall formulate the joint state and parameter estimation method with a general feedback operator $K$, but in the analysis of this paper, we treat only colocated feedbacks $K=\kappa C^*$ with bounded observation operators $C$, where $\kappa>0$ is called the observer gain. For background on observers with colocated feedback, we refer to \cite{Liu} studying systems with skew-adjoint generators and bounded observation operators, and \cite{Curtain_Weiss} studying systems with essentially skew-adjoint and dissipative (ESAD) generators and also unbounded observation operators. For studies on colocated feedback on elastic systems, we refer to \cite{collocated_book} and \cite{Chapelle_wave}.

An assumption that is often needed in results on observers is that the system is \emph{exactly observable at time $T$}, namely that there exists $\gamma >0$ such that
\[
\int_0^T \norm{Ce^{At}\zeta}_{\Ys}^2 dt \ge \gamma^2 \norm{\zeta}_{\Xs}^2
\]
for all $\zeta \in \Xs$. A classical result says that for skew-adjoint generators $A$ and bounded observation operators $C$, the closed loop operator with colocated feedback, $A-\kappa C^*C$ for $\kappa>0$ generates an exponentially stable semigroup if and only if the system is exactly observable at some time $T$ \cite[Theorem~2.3]{Liu}. See also \cite{Haraux} and \cite[Chapter~2]{collocated_book} for related results for second order systems with  bounded and unbounded observation operators, respectively. The reverse direction holds also for dissipative generators. However, if $\kappa$ is too large, the observer becomes over-damped and its performance will deteriorate. As an illuminating example, see \cite{Cox} for a study on energy dissipation for damped wave equation. A feasible gain value $\kappa$ can be determined numerically by spectral methods, as in \cite[Section~3]{Moireau08} or \cite{Chapelle_wave}.

In this article we wish to estimate the initial state of the system efficiently. For some systems --- in particular, for systems with skew-adjoint generators --- this can be done using a Luenberger observer alternately forward and backward in time. This strategy is generally known as \emph{back and forth nudging} (BFN), and it was originally proposed by Auroux and Blum in \cite{AB05} and \cite{AB08}. A more rigorous treatment was carried out by Ramdani \emph{et al.} in \cite{Ramdani10} and further studies include \cite{Haine} by Haine, \cite{HR12} by Haine and Ramdani, \cite{Fridman} by Fridman,  and \cite{OEBFN} by Aalto. In \cite{OEBFN} it is shown that the initial state estimate given by the BFN method converges to the minimizer of the $L^2$-norm of the output discrepancy if the observer gains are taken to zero with a suitable rate. 

In the BFN method, the dynamics of the forward observer in the $j^{\textup{th}}$ iteration, $\hat z_j^+$, are given by the normal observer equation \eqref{eq:observer} (with $K=K_j^+$) initialized from the final state of the backward observer on the previous iteration, that is, $\hat z_j^+(0)=\hat z_{j-1}^-(T)$. The dynamics of the backward observer are given by
\[
\begin{cases}
\dot{\hat z}_j^-(t)=-A\hat z_j^-(t)-f(T-t)+K_j^-(y(T-t)-C\hat z_j^-(t)), \\
\hat z_j^-(0)=\hat z_j^+(T).
\end{cases}
\]
With such definition, the backward estimate $\hat z_j^-(t)$ is an estimate for $z(T-t)$ and so the initial state estimate after $j$ iterations is given by $\hat z_j^-(T)$. The main result of \cite{Ramdani10} is that if there are no noises, that is, $\eta,\nu=0$ in \eqref{eq:system}, and if the system is both forward and backward stabilizable, that is, there exist forward and backward feedback operators $K^{\pm}$ such that $A-K^+C$ and $-A-K^-C$ are exponentially stable, then the initial state estimate given by the BFN method converges exponentially to the true initial state of \eqref{eq:system}. The main result of \cite{OEBFN} is concerned with colocated feedback $K_j^{\pm}=\kappa_j C^*$, and it states that if $A$ is skew-adjoint and the system is exactly observable at time $T$, then assuming that the observer gains satisfy $\sum_{j=1}^{\infty} \kappa_j=\infty$ and $\sum_{j=1}^{\infty} \kappa_j^2 < \infty$, then the initial state estimate converges to the minimizer of the cost function
\[
J_0(x):=\int_0^T \norm{y(t)-C\hat z[x](t)}_{\Ys}^2 dt
\]
where $\hat z[x]$ is the solution to
\[
\begin{cases}
\dot{\hat z}[x]=A\hat z[x]+f, \\
\hat z[x](0)=x.
\end{cases}
\]
This is an important result from the point of view of this article, since we wish to use the BFN method as a  minimization scheme. We do not use the results of \cite{OEBFN} directly, but the techniques used in this paper are similar to the techniques used there.

\subsection{Preliminary results}

The main result of this section is that in the linear case the cost function $J$ given in \eqref{eq:cost_main}, is strictly convex under some assumptions. In the bilinear case, a small modification of these results show that under the exact observability assumption, having the Tikhonov regularization term in the cost function \eqref{eq:cost_bilin} only for the parameter $\xi$ is sufficient to make the curvature of the attainable set sufficiently small in the sense of \cite{Chavent}. This means that with sufficiently big regularization, the optimization problem has a unique solution if the parameter set is sufficiently restricted. 

First, we need an auxiliary result showing that if the system is exactly observable, then also the closed loop observer system is exactly observable, which we will show for general, bounded feedback operators.
\begin{lma} \label{lma:obs_pres}
Assume  $\norm{Ce^{At}x}_{L^2(0,T;\Ys)} \ge \gamma_0 \norm{x}_{\Xs}$ and that the semigroup $e^{At}$ is contractive.  Then it holds that $\norm{Ce^{(A-KC)t}x}_{L^2(0,T;\Ys)} \ge \gamma \norm{x}_{\Xs}$ with $\gamma=\frac{\gamma_0\sqrt{2}}{\sqrt{2}+T\norm{C}\norm{K}}$.
\end{lma}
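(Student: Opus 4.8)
The plan is to relate the closed-loop output $Ce^{(A-KC)t}x$ to the open-loop output $Ce^{At}x$ via the variation of constants formula, and then estimate the difference. Writing $w(t) = e^{(A-KC)t}x$, we have $w(t) = e^{At}x - \int_0^t e^{A(t-s)}KCw(s)\,\ud s$, so that $Ce^{At}x = Cw(t) + C\int_0^t e^{A(t-s)}KCw(s)\,\ud s$. Taking $L^2(0,T;\Ys)$ norms and using the triangle inequality gives
\[
\gamma_0 \norm{x}_{\Xs} \le \norm{Ce^{At}x}_{L^2(0,T;\Ys)} \le \norm{Cw}_{L^2(0,T;\Ys)} + \norm{C\textstyle\int_0^\cdot e^{A(\cdot-s)}KCw(s)\,\ud s}_{L^2(0,T;\Ys)}.
\]
The first term on the right is exactly the quantity $\norm{Ce^{(A-KC)t}x}_{L^2(0,T;\Ys)}$ we want to bound below, so it suffices to bound the second (perturbation) term by a constant times $\norm{Cw}_{L^2(0,T;\Ys)}$ and rearrange.

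\textbf{Estimating the perturbation term.} The key step is to show that the convolution operator $g \mapsto C\int_0^t e^{A(t-s)}Kg(s)\,\ud s$, viewed as a map on $L^2(0,T;\Ys)$, has norm at most $\tfrac{T}{\sqrt 2}\norm{C}\norm{K}$, applied with $g = Cw$. For a fixed $t$, contractivity of $e^{At}$ gives the pointwise bound $\norm{C\int_0^t e^{A(t-s)}Kg(s)\,\ud s}_{\Ys} \le \norm{C}\norm{K}\int_0^t \norm{g(s)}_{\Ys}\,\ud s \le \norm{C}\norm{K}\sqrt{t}\,\norm{g}_{L^2(0,t;\Ys)}$ by Cauchy--Schwarz. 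Squaring, integrating $t$ over $[0,T]$, and bounding $\norm{g}_{L^2(0,t;\Ys)} \le \norm{g}_{L^2(0,T;\Ys)}$ yields $\int_0^T (\cdots)\,\ud t \le \norm{C}^2\norm{K}^2 \norm{g}_{L^2(0,T;\Ys)}^2 \int_0^T t\,\ud t = \tfrac{T^2}{2}\norm{C}^2\norm{K}^2\norm{g}_{L^2(0,T;\Ys)}^2$, giving the claimed operator-norm bound $\tfrac{T}{\sqrt 2}\norm{C}\norm{K}$. Substituting $g=Cw$ and noting $\norm{Cw}_{L^2(0,T;\Ys)} = \norm{Ce^{(A-KC)t}x}_{L^2(0,T;\Ys)}$, the chain of inequalities becomes
\[
\gamma_0\norm{x}_{\Xs} \le \left(1 + \tfrac{T}{\sqrt 2}\norm{C}\norm{K}\right)\norm{Ce^{(A-KC)t}x}_{L^2(0,T;\Ys)}.
\]
Solving for the closed-loop norm gives $\norm{Ce^{(A-KC)t}x}_{L^2(0,T;\Ys)} \ge \tfrac{\gamma_0}{1 + T\norm{C}\norm{K}/\sqrt 2}\norm{x}_{\Xs} = \tfrac{\gamma_0\sqrt 2}{\sqrt 2 + T\norm{C}\norm{K}}\norm{x}_{\Xs}$, which is exactly the stated constant $\gamma$.

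\textbf{Main obstacle.} The argument is essentially routine once set up; the only points requiring a little care are (i) making sure the variation of constants formula is applied in the right direction — i.e.\ expressing the \emph{open}-loop trajectory in terms of the \emph{closed}-loop one, so that the desired lower-bounded quantity appears with a $+1$ coefficient rather than buried inside the perturbation — and (ii) getting the sharp constant $T/\sqrt 2$ from the Cauchy--Schwarz step rather than a cruder $T$, which is what produces the precise value of $\gamma$ in the statement. One should also remark that contractivity of $e^{At}$ is used only to dominate $\norm{Ce^{A(t-s)}Kg(s)}_\Ys$ by $\norm{C}\norm{K}\norm{g(s)}_\Ys$; this is where that hypothesis enters, and no further properties of $A$ (skew-adjointness, observability beyond the assumed lower bound) are needed.
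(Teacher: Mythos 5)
Your proof is correct and follows essentially the same route as the paper's: both use the semigroup perturbation (variation of constants) formula to express the difference of outputs, bound the convolution term pointwise by $\norm{C}\norm{K}\sqrt{t}\,\norm{Ce^{(A-KC)s}x}_{L^2(0,t;\Ys)}$ via contractivity and Cauchy--Schwarz, integrate to get the factor $T/\sqrt{2}$, and rearrange. The constant $\gamma$ you obtain matches the paper's exactly.
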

\begin{proof}
Fix $x \in \Xs$ and consider the difference of the trajectories
\[
e^{At}x-e^{(A-KC)t}x=\int_0^t e^{A(t-s)}KCe^{(A-KC)s}x \, ds
\]
given by the semigroup perturbation formula (see \cite[Section~3.1]{Pazy}). As $\norm{e^{At}} \le 1$, application of the Cauchy--Schwartz inequality to the integral yields
\[
\bnorm{Ce^{At}x-Ce^{(A-KC)t}x}_{\Ys} \le \norm{C}\norm{K}\sqrt{t}\bnorm{Ce^{(A-KC)s}x}_{L^2(0,t;\Ys)}.
\]
Using this pointwise estimate together with the observability assumption and the triangle inequality in $L^2(0,T;\Ys)$ gives
\begin{align*}
\bnorm{Ce^{(A-KC)t}x}_{L^2(0,T;\Ys)} & \ge \gamma_0\norm{x}_{\Xs}-\norm{C}\norm{K}\left(\int_0^T \! t \bnorm{Ce^{(A-KC)s}x}_{L^2(0,t;\Ys)}^2 dt \right)^{1/2} \\ & \ge \gamma_0 \norm{x}_{\Xs}-\frac{\norm{C}\norm{K}T}{\sqrt{2}}\bnorm{Ce^{(A-KC)t}x}_{L^2(0,T;\Ys)}
\end{align*}
where the second inequality follows again from the Cauchy--Schwartz inequality.
The last term is then moved to the left hand side, after which the claim follows by multiplying both sides by $\frac{\sqrt{2}}{\sqrt{2}+T\norm{C}\norm{K}}$.
\end{proof}

The main result of this section is the strict convexity of the cost function $J$ in the linear case, implying the existence of a unique minimizer.
\begin{lma} \label{lma:convex}
Assume that the system is exactly observable, $A$ generates a contractive semigroup, and $U_0 \ge \delta I$ for some $\delta > 0$. Then the cost function $J$ defined in \eqref{eq:cost_main} is strictly convex.
\end{lma}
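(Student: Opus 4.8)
The plan is to exploit the fact that the observer state $\hat z[\xi,\zeta]$ depends \emph{affinely} on the pair $(\xi,\zeta)$, which makes $J$ a quadratic functional, and then to reduce strict convexity to positivity of its quadratic part. First I would rewrite \eqref{eq:obs_main} as $\dot{\hat z}=(A-\kappa C^*C)\hat z+B\xi+\kappa C^*y$, $\hat z(0)=\zeta$, and decompose $\hat z[\xi,\zeta]=\hat z_0+\Phi(\xi,\zeta)$, where $\hat z_0:=\hat z[0,0]$ carries all the $y$-dependence and $\Phi(\xi,\zeta)$ solves
\[
\dot\Phi=(A-\kappa C^*C)\Phi+B\xi,\qquad \Phi(0)=\zeta.
\]
Since $C$ is bounded and $e^{At}$ is contractive, $A-\kappa C^*C$ generates a $C_0$-semigroup, so $\Phi$ is well defined, with $\Phi(\xi,\zeta)(t)=e^{(A-\kappa C^*C)t}\zeta+\int_0^t e^{(A-\kappa C^*C)(t-s)}B(s)\xi\,ds$, and this is manifestly \emph{linear} in $(\xi,\zeta)$. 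Hence $y-C\hat z[\xi,\zeta]=(y-C\hat z_0)-C\Phi(\xi,\zeta)$ is affine in $(\xi,\zeta)$, and, assuming the measurement satisfies $y\in L^2(0,T;\Ys)$ so that $J$ is finite, $J$ has the form ``quadratic form plus affine'', with quadratic part
\[
Q(\xi,\zeta):=\ip{\xi,U_0\xi}+\int_0^T\norm{C\Phi(\xi,\zeta)}_{\Ys}^2\,d\tau .
\]

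Next I would use the elementary fact that a functional of the form ``quadratic form $Q$ plus affine'' satisfies $(1-t)J(a)+tJ(b)-J((1-t)a+tb)=t(1-t)\,Q(a-b)$ for $t\in(0,1)$, so that $J$ is strictly convex if and only if $Q(v)>0$ for every $v\ne 0$. Since $Q$ is visibly nonnegative, it suffices to rule out $Q(\xi,\zeta)=0$ for $(\xi,\zeta)\ne(0,0)$, which I would do by a two-case split. If $\xi\ne 0$, then $U_0\ge\delta I$ gives $\ip{\xi,U_0\xi}\ge\delta\norm{\xi}_{\Theta}^2>0$, and the integral term is nonnegative, so $Q(\xi,\zeta)>0$. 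If $\xi=0$ but $\zeta\ne 0$, then $\Phi(0,\zeta)(t)=e^{(A-\kappa C^*C)t}\zeta$, so $Q(0,\zeta)=\int_0^T\norm{Ce^{(A-\kappa C^*C)t}\zeta}_{\Ys}^2\,dt$; here I invoke Lemma~\ref{lma:obs_pres} with the bounded feedback $K=\kappa C^*$ (using exact observability and contractivity of $e^{At}$) to get $Q(0,\zeta)\ge\gamma^2\norm{\zeta}_{\Xs}^2>0$. The case $\kappa=0$ is covered as well, the bound then coming directly from the exact observability hypothesis. In all cases $Q(\xi,\zeta)>0$, hence $J$ is strictly convex.

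I do not anticipate a genuine obstacle: the substance lies entirely in spotting the affine structure and in the closed-loop observability estimate, and the latter is exactly what Lemma~\ref{lma:obs_pres} supplies. The only points needing a little care are the routine well-posedness remarks (that $A-\kappa C^*C$ generates a semigroup and that $C\Phi\in L^2(0,T;\Ys)$, using $B\in L^2(0,T;\mathcal L(\Theta,\Xs))$, so that $Q$ and $J$ are finite) and the observation that the Tikhonov term feeds only into the $\xi$-block of the Hessian — which is precisely why the direction $\xi=0$ must be handled through observability of the closed-loop state-to-output map rather than through $U_0$.
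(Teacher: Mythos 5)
Your proof is correct for the statement as literally formulated, but it takes a genuinely different route from the paper's. You and the paper both start from the same observation — the observer output depends affinely on $(\xi,\zeta)$, so $J$ is ``quadratic form plus affine'' — and you both lean on Lemma~\ref{lma:obs_pres} for the closed-loop observability estimate. The divergence is in how positivity of the quadratic part is established. You reduce strict convexity to $Q(v)>0$ for $v\neq 0$ via the identity $(1-t)J(a)+tJ(b)-J((1-t)a+tb)=t(1-t)Q(a-b)$ and then dispose of the two cases $\xi\neq 0$ (using $U_0\ge\delta I$) and $\xi=0,\ \zeta\neq 0$ (using the closed-loop observability bound); this is shorter and more elementary. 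The paper instead writes the quadratic part as $\norm{\Gamma\sm{\xi\\ \zeta}}^2$ and proves the \emph{uniform} bound $\Gamma^*\Gamma\ge\epsilon I$ with an explicit $\epsilon=\frac12\min\bigl(\delta,\frac{\gamma^2\delta}{\norm{P}^2+\delta}\bigr)$, via Schur complements and the Woodbury identity. What the paper's heavier machinery buys is strong convexity (coercivity of the quadratic part), which is what actually justifies the surrounding claim that a minimizer \emph{exists}: in an infinite-dimensional Hilbert space, your pointwise positivity $Q(v)>0$ gives strict convexity and hence uniqueness of any minimizer, but not existence — your two-case bound does not combine into $Q(\xi,\zeta)\ge\epsilon(\norm{\xi}^2+\norm{\zeta}^2)$, since for $\xi$ small and $\zeta$ large the first case yields only $\delta\norm{\xi}^2$. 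So if you want your argument to support everything the paper uses this lemma for (not just the literal statement), you would need to upgrade the case split to a uniform lower bound, e.g.\ by the paper's Schur-complement computation or by a Young-type splitting of the cross term $\ip{Ce^{(A-\kappa C^*C)\cdot}\zeta, C\Pi\xi}$.
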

\begin{proof}
The cost function $J$ can be rewritten to
\[
J(\xi,\zeta)=\norm{\bm{y \\ \sqrt{U_0}\theta_0}-\bm{C\hat z[\xi,\zeta] \\ \sqrt{U_0}\xi}}_{L^2(0,T;\Ys) \times \Theta}^2
\]
where the square root operator $\sqrt{U_0}$ exists since $U_0$ is self-adjoint and strictly positive.
The second term depends on $(\xi,\zeta)$ through an affine map
\[
\bm{C\hat z[\xi,\zeta] \\ \sqrt{U_0}\xi}=\Gamma \bm{\xi \\ \zeta}+b.
\]
The load term $b$ is the contribution of the $\kappa C^*y$ term in \eqref{eq:obs_main} but it does not matter here. 
The coefficient operator is
\[
\Gamma=\bm{Ce^{(A-\kappa C^*C)\cdot} & C\Pi \\ 0 & \sqrt{U_0}} \ : \   \Xs \times \Theta \to L^2(0,T;\Ys) \times \Theta
\]
where $\Pi$ is the sensitivity operator from the parameter to the state estimate, and it is the solution to $\dot\Pi=(A-\kappa C^*C)\Pi+B$, $\Pi(0)=0$. Here it should be understood as an operator mapping from $\Theta$ to $L^2(0,T;\Xs)$.
Since the problem is linear-quadratic, the strict convexity follows from $\Gamma^*\Gamma \ge \epsilon I$ for some $\epsilon > 0$ that will be shown now.

Just for this proof we define a  shorthand notation $\Gamma=\sm{S & P \\ 0 & \sqrt{U}}$ ($S$ and $P$ standing for ``state" and ``parameter") and
then we can write a block form for the product
\[
\Gamma^*\Gamma=\bm{ S^*S & S^*P \\ P^*S & P^*P+U}.
\]
We intend to show that there exists $\epsilon>0$ such that  $\Gamma^*\Gamma-\lambda I \ge 0$ for $\lambda \in [0,\epsilon]$.
The rest of the proof is based on the theory of Schur complements in Hilbert space (see, e.g., \cite{Schur}, Thm.~2.2 in particular). It holds that $\Gamma^*\Gamma -\lambda I \ge 0$ if and only if $S^*S -\lambda I \ge 0$, $P^*P+U-\lambda I \ge 0$, and
\[
S^*S-S^*P\big(P^*P+U-\lambda I\big)^{-1}P^*S-\lambda I \ge 0.
\] 
By Lemma~\ref{lma:obs_pres} and the exact observability assumption, it holds that $S^*S \ge \gamma^2I$ where $\gamma>0$ is given in Lemma~\ref{lma:obs_pres}, and $U \ge \delta I$ was assumed in the theorem. Therefore the first two conditions are satisfied if we assume $\lambda < \min( \gamma^2,\delta)$. We shall then find an upper bound for $\lambda$ so that the last condition is satisfied. 
It holds that
\begin{align*}
& S^*S-S^*P\big(P^*P+U-\lambda I\big)^{-1}P^*S-\lambda I \\
& \ge S^*S-S^*P\big(P^*P+(\delta-\lambda) I\big)^{-1}P^*S-\lambda I \\
&=S^*\left(I+\frac1{\delta-\lambda}PP^* \right)^{-1}S-\lambda I
\end{align*}
where the last equality holds by the Woodbury identity. For the term inside parentheses, it holds that $I+\frac1{\delta-\lambda}PP^*  \le \frac{\norm{P}^2+\delta-\lambda}{\delta-\lambda}I$, and hence
\[
S^*\left(I+\frac1{\delta-\lambda}PP^* \right)^{-1}S-\lambda I \ge \frac{\delta-\lambda}{\norm{P}^2+\delta-\lambda}S^*S-\lambda I 
\]
which is nonnegative if (for example) $\lambda \le \min \left( \frac{\delta}2 , \frac{\gamma^2 \delta}{2(\norm{P}^2+\delta)} \right)$. Thus it holds that $\Gamma^*\Gamma \ge \epsilon I$ where $\epsilon=\frac12 \min \left(\delta , \frac{\gamma^2 \delta}{\norm{P}^2+\delta} \right)>0$.
\end{proof}

\section{Linear parameter estimation}

Let us start with the source identification problem that was already formulated in \eqref{eq:lin_syst}, \eqref{eq:cost_main}, and \eqref{eq:obs_main} in the Introduction.
As mentioned, the parameter estimator in \cite{Moireau08} is based on the (extended) Kalman filter. If the method is applied on the time interval $[0,T]$, in the linear case the Kalman filter based strategy actually corresponds to a Gauss--Newton optimization step. Motivated by this observation, we propose a hybrid method for estimating the system's initial state and parameters. In this method, the initial state is estimated by the BFN method, and the parameters by taking one Gauss--Newton step between the BFN iterations. As in \cite{OEBFN}, we intend to use variable gain in the BFN iterations (in general). Therefore also the cost function to which the Gauss--Newton method is applied, changes between iterations. Namely, at every iteration, we take one Gauss--Newton step applied to the cost function
\[
J_j(\xi):=\ip{\xi-\theta_0,U_0(\xi-\theta_0)}+\int_0^T \norm{y(t)-C\hat z_j[\xi](t)}_{\Ys}^2 dt,
\]
where $\hat z_j[\xi]$ is the solution to
\[
\begin{cases}
\dot{\hat z}_j[\xi]=A \hat z_j[\xi]+B\xi+K_j(y-C\hat z_j[\xi]), \\
\hat z_j[\xi](0)=\hat z_{j-1}^-(T).
\end{cases}
\]

For the Gauss--Newton method, we need to compute the derivative of the estimated output with respect to the parameter. This can be obtained through the sensitivity operator $\Pi_j$, which is defined as the (Fr\'echet) derivative of the state estimate with respect to the parameter. The forward part of the state estimation together with the Gauss--Newton step is
\begin{equation} \label{eq:linear_joint}
 \begin{cases}
 \dot{\hat z}_j^+=A\hat z_j^+ +K_j(y-C\hat z_j^+)+B\hat\theta_j, \quad \hat z_j^+(0)=\hat z_{j-1}^-(T), \\
 \dot{\Pi}_j= (A-K_jC)\Pi_j + B, \quad \Pi_j(0)=0,     \\
 \dot{U}= \Pi_j^*C^*C\Pi_j, \quad U(0)=U_0, \\
 \dot{\xi}= \Pi_j^{*}C^*\big(y-C\hat z_j^+ \big), \quad \xi(0)=0, \\
 \hat\theta_{j+1}=\hat\theta_j-U(T)^{-1}\big(U_0(\hat\theta_j-\theta_0)-\xi(T)\big).
  \end{cases}
 \end{equation}
 The backward state observer is defined "forward in time" by
 \begin{equation} \label{eq:bwd_lin}
 \begin{cases}
 \dot{\hat z}_j^-(t)=-A\hat z_j^-(t) +K_j(y(T-t)-C\hat z_j^-(t))-B(T-t)\hat\theta_{j+1}, \\ \hat z_j^-(0)=\hat z_j^+(T)+\Pi_j(T)(\hat\theta_{j+1}-\hat\theta_j).
 \end{cases}
 \end{equation} 
 With such definition, $\hat z_j^-(t)$ is actually an estimate for $z(T-t)$. 
 
 The method has been defined for general stabilizing feedback operators $K_j$ but in order to be able to show any optimality results, we shall resort to the case of colocated feedback $K_j=\kappa_j C^*$ in the analysis in the following sections.

 \subsection{Optimization problem with feedback}
 In this section, we consider the method with constant gain, $\kappa_j=\kappa > 0$. The ultimate goal is to show that the state and parameter estimates given by the method \eqref{eq:linear_joint} and \eqref{eq:bwd_lin} converge to the unique solution $\zeta^o \in \Xs,\theta^o \in \Theta$ of the following optimization problem:
\begin{equation} \label{eq:cost}
\min_{\zeta \in \Xs, \xi \in \Theta} \ip{\xi-\theta_0,U_0(\xi-\theta_0)}_{\Theta}+ \int_0^T\norm{y-C\hat z[\zeta,\xi]}_{\Ys}^2 d\tau
\end{equation}
where $\hat z[\zeta,\xi]$ is the solution to
\begin{equation} \label{eq:obs1}
\begin{cases}
\dot{\hat z}[\zeta,\xi]=A\hat z[\zeta,\xi]+B\xi+\kappa C^*(y-C\hat z[\zeta,\xi]), \\
\hat z[\zeta,\xi](0)=\zeta.
\end{cases}
\end{equation}
The existence of a unique solution follows from the strict convexity of the cost function, which is shown in Lemma~\ref{lma:convex}. Denote the optimal trajectory by $z^o:=\hat z[\zeta^o,\theta^o]$ and the corresponding output residual by $\chi:=y-Cz^o$. 

As the dependence of the observer output  $C\hat z$ on the parameter $\xi$ is linear, the Gauss--Newton step actually gives the optimal parameter corresponding to the initial state estimate. Thus, at every iteration, the parameter estimate $\hat\theta_{j+1}$ is the solution to the minimization problem
\begin{equation} \nonumber
\min_{\xi \in \Theta} \ip{\xi-\theta_0,U_0(\xi-\theta_0)}_{\Theta}+ \int_0^T\bnorm{y-C\hat z[\hat z_j^+(0),\xi]}_{\Ys}^2 d\tau
\end{equation}
where $\hat z[\hat z_j^+(0),\xi]$ is defined in \eqref{eq:obs1}. In addition, it is easy to verify by differentiation that for any two parameters $\xi_1,\xi_2 \in \Theta$, it holds that
\[
\hat z[\xi_1,\zeta](t)=\hat z[\xi_2,\zeta](t)+\Pi(t)\big( \xi_1-\xi_2 \big).
\]
Thus, because of the correction made to the initial state of the backward observer in \eqref{eq:bwd_lin}, the new initial state estimate $\hat z_j^-(T)$ does not depend on the previous parameter estimate $\hat\theta_j$. Therefore, to obtain convergence of the method, it suffices to study the sequence of the initial state estimates $\hat z_j^-(T)$. If we can show $\hat z_j^-(T) \to \zeta^o$ as $j \to \infty$, then also $\hat\theta_j \to \theta^o$ as $j \to \infty$.

It is possible to interpret \eqref{eq:linear_joint} and \eqref{eq:bwd_lin} as a mapping $\hat z_j^-(T)=f(\hat z_{j-1}^-(T))$ where $f:\Xs \to \Xs$ is an affine transformation. Next we show that the optimal initial state $\zeta^o$ is a fixed point of this mapping.
\begin{lma} \label{lma:fix_lin}
Assume $A$ is skew-adjoint. Then the minimizer $\zeta^o$ is a fixed point of the mapping $f$, that is, $f(\zeta^o)=\zeta^o$.
\end{lma}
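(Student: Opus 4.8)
The plan is to characterize the fixed point via first-order optimality conditions for \eqref{eq:cost} and then show that running one forward--backward BFN sweep (with the Gauss--Newton parameter update) starting from $\zeta^o$ returns $\zeta^o$. First I would write down the Euler--Lagrange/normal equations for the joint minimization \eqref{eq:cost}: stationarity in $\xi$ gives $U_0(\theta^o-\theta_0)=\int_0^T \Pi(\tau)^*C^*\chi(\tau)\,d\tau$, and stationarity in $\zeta$ gives $\int_0^T \Phi(\tau)^*C^*\chi(\tau)\,d\tau=0$, where $\Phi(t)=e^{(A-\kappa C^*C)t}$ is the state-to-state sensitivity and $\chi=y-Cz^o$ is the optimal residual. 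The key consequence of the $\xi$-equation, combined with the affine relation $\hat z[\xi_1,\zeta]=\hat z[\xi_2,\zeta]+\Pi(\xi_1-\xi_2)$ noted just before the lemma, is that when the sweep is started from $\zeta^o$ the Gauss--Newton step reproduces exactly $\theta^o$; hence the forward observer trajectory in \eqref{eq:linear_joint} initialized at $\zeta^o$ coincides with $z^o$, and its residual is $\chi$.

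Next I would analyze the backward observer \eqref{eq:bwd_lin}. Since the forward pass from $\zeta^o$ produces $z^o$ with $\hat\theta_{j+1}=\hat\theta_j=\theta^o$, the initial-state correction term $\Pi_j(T)(\hat\theta_{j+1}-\hat\theta_j)$ vanishes, so the backward observer starts from $\hat z_j^-(0)=z^o(T)$ and runs the time-reversed colocated observer with load $-B(T-\cdot)\theta^o$. Writing $w(t):=z^o(T-t)-\hat z_j^-(t)$ for the backward estimation error, one checks by differentiation that $\dot w=(-A-\kappa C^*C)w-\kappa C^*\chi(T-\cdot)$ with $w(0)=0$; here the $-\kappa C^*\chi$ forcing appears precisely because $z^o$ is an observer trajectory for the \emph{forward} dynamics, not the backward ones. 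The task is then to show $w(T)=0$, i.e. $\hat z_j^-(T)=z^o(0)=\zeta^o$, which by the variation-of-constants formula means $\int_0^T e^{(-A-\kappa C^*C)(T-s)}\kappa C^*\chi(T-s)\,ds=0$.

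The crux is to connect this backward integral identity to the $\zeta$-stationarity condition $\int_0^T \Phi(\tau)^*C^*\chi(\tau)\,d\tau=0$. Here is where skew-adjointness of $A$ enters decisively: with $A^*=-A$ one has $(A-\kappa C^*C)^*=-A-\kappa C^*C$, so $\Phi(\tau)^*=e^{(-A-\kappa C^*C)\tau}$ is exactly the backward semigroup. Substituting $\tau=T-s$ in the stationarity condition and multiplying by $e^{(-A-\kappa C^*C)\cdot}\kappa$ (using the semigroup property $e^{(-A-\kappa C^*C)(T-s)}=e^{(-A-\kappa C^*C)T}e^{-(-A-\kappa C^*C)s}$ — care is needed since $-A-\kappa C^*C$ may not generate a group, but $\kappa C^*C$ is bounded and $A$ is skew-adjoint so $e^{-At}$ is a group, which suffices) turns the vanishing inner integral into the vanishing of $w(T)$. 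I expect the main obstacle to be exactly this semigroup bookkeeping: keeping the direction of time, the adjoints, and the placement of the bounded perturbation $\kappa C^*C$ straight, and justifying the factorization of the backward evolution so that the orthogonality condition from the forward problem transfers cleanly to the backward terminal value. Once that algebra is done, $f(\zeta^o)=\zeta^o$ follows, and I would close by noting that since $f$ is affine this identifies $\zeta^o$ as \emph{the} fixed point.
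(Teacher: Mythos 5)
Your argument follows the paper's proof essentially verbatim: reduce to the backward estimation error dynamics forced by the optimal residual, apply variation of constants, and identify the resulting integral with the $\zeta$-gradient of the cost at the optimum via $(A-\kappa C^*C)^*=-A-\kappa C^*C$. Two small corrections: the backward error forcing is $-2\kappa C^*\chi(T-t)$, not $-\kappa C^*\chi(T-t)$ (one contribution because $z^o$ is a \emph{forward} observer trajectory, and a second from the backward observer's own feedback term, since $y(T-t)-C\hat z^-(t)=Cw(t)+\chi(T-t)$) --- this is harmless since the integral vanishes regardless of the constant --- and the semigroup factorization you worry about at the end is unnecessary: the substitution $s\mapsto T-s$ alone converts $\int_0^T e^{(-A-\kappa C^*C)(T-s)}C^*\chi(T-s)\,ds$ directly into the stationarity integral $\int_0^T e^{(A-\kappa C^*C)^*s}C^*\chi(s)\,ds=0$.
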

\begin{proof}
As concluded above, in the linear case the new state and parameter estimates $\hat z_j^-(T)$ and $\hat\theta_{j+1}$ do not depend on the old parameter estimate $\hat\theta_j$. Therefore, to simplify computations, we can set $\hat\theta_j=\theta^o$. As the trajectory $\hat z_j^+$ is initialized from the optimal initial state, and also the parameter is the optimal parameter, it holds that $\hat z_j^+=z^o$. Therefore it also holds that $\hat\theta_{j+1}=\theta^o$.
The dynamics of the backward estimation error $\varepsilon^-(t):=z^o(T-t)-\hat z_j^-(t)$ are given by
\begin{align*}
\dot \varepsilon^-(t) &= (-A+\kappa C^*C)z^o(T-t)-(-A-\kappa C^*C)\hat z^-(t)-2\kappa C^*y(T-t) \\ &= (-A-\kappa C^*C)\varepsilon^-(t)-2\kappa C^*\chi(T-t)
\end{align*}  
with $\varepsilon^-(0)=0$. In the second equality, we used $y=Cz^o+\chi$. At time $t=T$, it holds that
\begin{align*}
\varepsilon^-(T)&=-2\kappa \int_0^T e^{(-A-\kappa C^*C)(T-s)}C^*\chi(T-s)ds \\ &=-2\kappa \int_0^Te^{(A^*-\kappa C^*C)s}C^*\chi(s)ds=0
\end{align*}
because the integral is exactly the derivative of the cost function \eqref{eq:cost} with respect to the initial state $\zeta$ at the optimum $\zeta=\zeta^o$. Thus $\hat z^-(T)=\zeta^o$ concluding the proof.
\end{proof}

Next we show that for small enough observer gain $\kappa>0$, there exists $k<1$ such that for any $\Delta\zeta \in \Xs$,
\[
\norm{f(\zeta^o+\Delta\zeta)-\zeta^o}_{\Xs} \le k \norm{\Delta\zeta}_{\Xs}
\]
meaning that $\zeta^o$ is an attractive fixed point, ensuring convergence of the method. At this point, define also the shifted mapping $g(\cdot):=f(\cdot+\zeta^o)-\zeta^o$ that has a fixed point $0$.

\begin{thm} \label{thm:attractive_lin}
Assume that $A$ is skew-adjoint and that the system is exactly observable, that is, there exists $\gamma > 0$ such that $\norm{Ce^{A\cdot}x}_{L^2(0,T;\Ys)} \ge \gamma \norm{x}_{\Xs}$ for all $x \in \Xs$. Assume also $U_0 \ge \delta I > 0$.

Then the function $g:\Xs \to \Xs$ defined through equations \eqref{eq:linear_joint} and \eqref{eq:bwd_lin} satisfies 
\[
\norm{g(\zeta)}_{\Xs} \le \big(1-\alpha\kappa+\mathcal{O}(\kappa^2)\big)\norm{\zeta}_{\Xs}
\]
where 
$\alpha=\min \left( \frac{\delta \gamma^2}{T^2\norm{C}_{\mathcal{L}(\Xs,\Ys)}^2\norm{B}_{L^2(0,T;\mathcal{L}(\Theta,\Xs))}^2},\gamma^2 \right)$.
\end{thm}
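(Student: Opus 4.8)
The plan is to exploit that the iteration map $f:\Xs\to\Xs$ defined by \eqref{eq:linear_joint}--\eqref{eq:bwd_lin} is affine and, by Lemma~\ref{lma:fix_lin}, fixes $\zeta^o$, so that $g=f(\,\cdot\,+\zeta^o)-\zeta^o$ is a bounded \emph{linear} operator; it then suffices to bound $\norm{g}$. Since, as noted before the theorem, $\hat z_j^-(T)$ does not depend on $\hat\theta_j$, I would evaluate $f(\zeta^o+\zeta)$ with $\hat\theta_j=\theta^o$. Writing $A_\kappa:=A-\kappa C^*C$ (a contraction generator, since $A$ is skew-adjoint and $-\kappa C^*C$ dissipative), superposition against $z^o=\hat z[\zeta^o,\theta^o]$ in the forward observer of \eqref{eq:linear_joint} gives $\hat z_j^+(t)=z^o(t)+e^{A_\kappa t}\zeta$, hence $y-C\hat z_j^+=\chi-Ce^{A_\kappa\cdot}\zeta$. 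Because the observed output is linear in the parameter, the Gauss--Newton step returns the exact minimizer of $J$ over $\xi$ with the initial state frozen; writing its normal equation and subtracting the optimality condition $\partial_\xi J(\zeta^o,\theta^o)=0$, i.e. $U_0(\theta^o-\theta_0)=\int_0^T\Pi^*C^*\chi\,dt$, yields $\hat\theta_{j+1}-\theta^o=-W^{-1}\Phi\,\zeta$, where $W:=U(T)=U_0+\int_0^T\Pi^*C^*C\Pi\,dt$ and $\Phi:=\int_0^T\Pi(t)^*C^*Ce^{A_\kappa t}\,dt$, with $\Pi=\Pi_j$ the ($j$-independent) sensitivity operator of \eqref{eq:linear_joint}.

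Next I would run the backward observer \eqref{eq:bwd_lin}. With $\varepsilon^-(t):=z^o(T-t)-\hat z_j^-(t)$, the computation of Lemma~\ref{lma:fix_lin} gives $\dot\varepsilon^-=(-A-\kappa C^*C)\varepsilon^--2\kappa C^*\chi(T-\cdot)+B(T-\cdot)(\hat\theta_{j+1}-\theta^o)$ with $\varepsilon^-(0)=-e^{A_\kappa T}\zeta-\Pi(T)(\hat\theta_{j+1}-\theta^o)$, where skew-adjointness lets one write $-A-\kappa C^*C=A_\kappa^*$. Integrating to $t=T$ and using (i) the optimality condition $\partial_\zeta J(\zeta^o,\theta^o)=0$, i.e. $\int_0^T e^{A_\kappa^* t}C^*\chi\,dt=0$ (exactly as in Lemma~\ref{lma:fix_lin}), to annihilate the residual term; (ii) the identity $e^{A_\kappa^*T}\Pi(T)-\int_0^T e^{A_\kappa^* t}B(t)\,dt=-2\kappa\int_0^T e^{A_\kappa^* t}C^*C\Pi(t)\,dt=-2\kappa\,\Phi^*$, obtained by differentiating $t\mapsto e^{A_\kappa^* t}\Pi(t)$ and using $A_\kappa^*+A_\kappa=-2\kappa C^*C$; and (iii) the energy identity $e^{A_\kappa^*T}e^{A_\kappa T}=I-2\kappa\,G_T^\kappa$, $G_T^\kappa:=\int_0^T e^{A_\kappa^* t}C^*Ce^{A_\kappa t}\,dt\ge0$, which follows from $\tfrac{d}{dt}\norm{e^{A_\kappa t}x}_\Xs^2=-2\kappa\norm{Ce^{A_\kappa t}x}_\Ys^2$, I would arrive at
\[
g(\zeta)=-\varepsilon^-(T)=\bigl(I-2\kappa\,M\bigr)\zeta,\qquad M:=G_T^\kappa-\Phi^*W^{-1}\Phi ,
\]
the \emph{self-adjoint} Schur complement of the operator $\Gamma^*\Gamma$ from the proof of Lemma~\ref{lma:convex} onto the state block (with $S=Ce^{A_\kappa\cdot}$, $P=C\Pi$).

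It then remains to bound $M$. By the push-through (Woodbury) identity $M=S^*\bigl(I+C\Pi\,U_0^{-1}(C\Pi)^*\bigr)^{-1}S$ with $S:=Ce^{A_\kappa\cdot}:\Xs\to L^2(0,T;\Ys)$. Using $U_0\ge\delta I$ and, via Lemma~\ref{lma:obs_pres} applied with $K=\kappa C^*$, the closed-loop observability $\bnorm{Ce^{A_\kappa\cdot}x}_{L^2(0,T;\Ys)}\ge\gamma_\kappa\norm{x}_\Xs$ with $\gamma_\kappa=\gamma\sqrt2/(\sqrt2+\kappa T\norm{C}^2)=\gamma+\mathcal{O}(\kappa)$, one gets $S^*S=G_T^\kappa\ge\gamma_\kappa^2 I$ and
\[
M\ \ge\ \frac{\delta}{\delta+\norm{C\Pi}^2}\,G_T^\kappa\ \ge\ \frac{\delta\,\gamma_\kappa^2}{\delta+\norm{C\Pi}^2}\,I ,
\]
while $0\le M\le G_T^\kappa\le T\norm{C}^2 I$. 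Hence for small $\kappa$ the self-adjoint operator satisfies $\norm{I-2\kappa M}\le 1-2\kappa\,\delta\gamma_\kappa^2/(\delta+\norm{C\Pi}^2)$. Inserting the crude uniform estimate $\norm{C\Pi}_{\mathcal{L}(\Theta,L^2(0,T;\Ys))}^2\le\tfrac{T^2}{2}\norm{C}^2\norm{B}_{L^2(0,T;\mathcal{L}(\Theta,\Xs))}^2$ (from $\norm{\Pi(t)}\le\sqrt t\,\norm{B}_{L^2}$ and $\norm{e^{A_\kappa\cdot}}\le1$) and distinguishing whether $\tfrac{T^2}{2}\norm{C}^2\norm{B}^2$ dominates $\delta$, one checks $2\delta\gamma_\kappa^2/(\delta+\norm{C\Pi}^2)\ge\alpha+\mathcal{O}(\kappa)$ with $\alpha$ as stated, whence $\norm{g(\zeta)}_\Xs\le\norm{I-2\kappa M}\,\norm{\zeta}_\Xs\le\bigl(1-\alpha\kappa+\mathcal{O}(\kappa^2)\bigr)\norm{\zeta}_\Xs$.

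The main obstacle is the second step: folding the forward observer, the Gauss--Newton increment, and the backward observer into the closed form $g(\zeta)=(I-2\kappa M)\zeta$. This requires tracking three cancellations simultaneously — both first-order optimality conditions of $J$ at $(\zeta^o,\theta^o)$ are needed (the $\xi$-condition to simplify the Gauss--Newton increment, the $\zeta$-condition to kill the $\chi$-terms in the backward pass); skew-adjointness is what turns the backward generator into $A_\kappa^*$ and makes the energy identity in (iii) exact; and the identity $e^{A_\kappa^*T}\Pi(T)-\int_0^T e^{A_\kappa^* t}B\,dt=-2\kappa\Phi^*$ in (ii) is what reveals that the parameter-correction contribution is exactly the $-\Phi^*W^{-1}\Phi$ piece of the Schur complement rather than an uncontrolled $\mathcal{O}(1)$ term. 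Once $g=I-2\kappa M$ with $M$ self-adjoint and bounded below by a positive multiple of the identity is in hand, the contraction estimate is routine, and the explicit $\alpha$ merely reflects a convenient, non-sharp choice of bounds for $\norm{C\Pi}$ and $\gamma_\kappa$.
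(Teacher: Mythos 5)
Your proposal is correct, and it takes a genuinely different route from the paper. The paper argues perturbatively in the gain: it observes that $g(\zeta)=\zeta$ at $\kappa=0$, computes only the derivative $\tfrac{d}{d\kappa}\tfrac12\norm{\hat z^-(T)}_{\Xs}^2\big|_{\kappa=0}$ (after normalizing $y=0$, $\theta_0=0$ so that $z^o=0$), identifies the Gauss--Newton increment as an orthogonal projection onto $\sm{C\Pi \\ I}\Theta$, and uses the resulting Pythagorean identity to reduce the derivative to $-2\bip{\hat\theta,U_0\hat\theta}-2\bnorm{Ce^{A\cdot}\Delta\zeta+C\Pi\hat\theta}^2$, which it then bounds below by $M\gamma^2\norm{\Delta\zeta}^2$. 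You instead derive the exact, non-perturbative closed form $g=I-2\kappa M$ with $M=G_T^\kappa-\Phi^*W^{-1}\Phi$ equal to the Schur complement of $\Gamma^*\Gamma$ from Lemma~\ref{lma:convex} onto the state block, and only then expand in $\kappa$; I checked the three cancellations (both first-order optimality conditions, the identity $e^{A_\kappa^*T}\Pi(T)-\int_0^Te^{A_\kappa^*t}B\,dt=-2\kappa\Phi^*$, and the energy identity $e^{A_\kappa^*T}e^{A_\kappa T}=I-2\kappa G_T^\kappa$) and they are all valid. Your approach buys an exact formula valid for every $\kappa$ (hence, in principle, a non-asymptotic contraction range rather than just a first-order statement), makes the link to the convexity lemma explicit (the contraction is literally positivity of the Schur complement), and reuses Lemma~\ref{lma:obs_pres} for the closed-loop observability constant $\gamma_\kappa$, which the paper's $\kappa=0$ argument does not need; the price is heavier bookkeeping in folding the forward pass, the Gauss--Newton increment, and the backward pass into one operator. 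Your final constant works out with some slack since your bound $\norm{C\Pi}^2\le\tfrac{T^2}{2}\norm{C}^2\norm{B}^2$ is in fact sharper than the paper's $T^2\norm{C}^2\norm{B}^2$. One cosmetic remark: your $\xi$-optimality condition $U_0(\theta^o-\theta_0)=\int_0^T\Pi^*C^*\chi\,d\tau$ has the opposite sign to \eqref{eq:residual} as printed, but it agrees with a direct differentiation and with the form used in Appendix~\ref{app:proof}, so the discrepancy is a typo in \eqref{eq:residual}, not an error on your part.
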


\begin{proof}
Firstly note that if $\kappa =0$, it holds that $g(\zeta)=\zeta$ for all $\zeta \in \Xs$. The idea in the proof is to show that $\frac{d}{d\kappa}\norm{g(\zeta)}_{\Xs}^2 \big|_{\kappa=0} \le -2\alpha \norm{\zeta}_{\Xs}^2$ for some $\alpha>0$ that does not depend on $\zeta$.

To get started, fix $\Delta \zeta \in \Xs$. As noted before Lemma~\ref{lma:fix_lin}, in the linear case the old parameter estimate $\hat\theta_j$ does not have any effect on the new state estimate $\hat z_j^-(T)$, because the trajectory is corrected to correspond to the new parameter in the beginning of the backward phase (see \eqref{eq:bwd_lin}). Neither does the old parameter affect the new parameter estimate $\hat\theta_{j+1}$. Hence we can use the parameter estimate $\hat\theta_{j+1}$ (from which we henceforth drop the index $j+1$) given by \eqref{eq:linear_joint} also in the forward phase. Since $f$ and therefore also $g$ are affine mappings in $\Xs$, the load term does not play a role in the convergence analysis, and therefore we can set $\theta_0=0$ and $y=0$. Clearly the optimum is then $\zeta^o=0$ and $\theta^o=0$ implying that also the whole trajectory $z^o$ is in fact zero. Now $g(\Delta\zeta)$ is the end state $\hat z^-(T)$ of the backward observer given by \eqref{eq:bwd_lin}. The proof is divided into two parts. In the first part, we derive a feasible expression for the derivative $\frac{d}{d\kappa} \frac12 \norm{\hat z^-(T)}_{\Xs}^2 \Big|_{\kappa=0}$ and, in the second part, we compute an upper bound for this expression.

{\bf Part 1:}
When the load terms are removed, the dynamics equations for the forward and backward estimates are simply
\[
\begin{cases}
\dot{\hat z}^+=(A-\kappa C^*C)\hat z^++B\hat \theta, \\ \hat z^+(0)=\Delta\zeta,
\end{cases}
\]
and
\[
\begin{cases}
\dot{\hat z}^-(t)=(-A-\kappa C^*C)\hat z^-(t)-B(T-t)\hat\theta, \\
\hat z^-(0)=\hat z^+(T).
\end{cases}
\]
The backward equation can be written in the form
\[
\dot{\hat z}^-(t)=(-A+\kappa C^*C)\hat z^-(t)-B(T-t)\hat\theta -2\kappa C^*C \hat z^-(t).
\]
If the last term here is neglected, then the equation is exactly the time-inverted forward equation, and then $\hat z^-(t)=\hat z^+(T-t)$. In addition, by applying the semigroup perturbation formula, the effect of the last term can be separated, and so it holds that
\begin{equation} \label{eq:confer}
\hat z^-(T)=\Delta\zeta -2\kappa \int_0^T e^{-(A-\kappa C^*C)(T-s)}C^*C\hat z^-(s)ds.
\end{equation} 
Using this and $-A=A^*$, we conclude
\begin{align*}
\frac{d}{d\kappa} \frac12 \norm{\hat z^-(T)}_{\Xs}^2 \Big|_{\kappa=0}&=\ip{\Delta\zeta,\frac{d}{d\kappa} \hat z^-(T)\big|_{\kappa=0}}_{\!\! \Xs} \\
&=-2\int_0^T \ip{\Delta\zeta,e^{-A(T-s)}C^*C\hat z^-(s)}_{\! \Xs}ds \\ & = -2 \int_0^T \ip{ Ce^{A(T-s)}\Delta\zeta,C\hat z^-(s) }_{\! \Ys}ds.
\end{align*}
Changing the integration variable $t=T-s$ and recalling that with $\kappa=0$,
\[
\hat z^-(s)=\hat z^+(T-s)=e^{At}\Delta\zeta+\int_0^t e^{A(t-r)}B(r)\hat\theta dr
\]
finally yields (henceforth we use the shorter notation $\Pi(t)\hat\theta=\int_0^t e^{A(t-r)}B(r)\hat\theta dr$)
\begin{align} \label{eq:ddk}
\frac{d}{d\kappa} \frac12 \norm{\hat z^-(T)}_{\Xs}^2 \Big|_{\kappa=0} =& -2 \norm{Ce^{A \cdot}\Delta\zeta}_{L^2(0,T;\Ys)}^2 \\ \nonumber & -2 \ip{Ce^{A\cdot}\Delta\zeta,C\Pi\hat\theta}_{\! \! L^2(0,T;\Ys)}.
\end{align}

It holds that $\bm{C\Pi \\ I}\hat \theta$ is the orthogonal projection of $\bm{-Ce^{A\cdot}\Delta\zeta \\ 0}$ onto the subspace $\bm{C\Pi \\ I}\Theta \subset L^2(0,T;\Ys) \times \Theta$ with $\Theta$ equipped with norm $\ip{\theta,U_0\theta}$. Thus it holds that 
\begin{align} \label{eq:proj_ip}
&\ip{Ce^{A\cdot}\Delta\zeta,C\Pi\hat\theta}_{\! \! L^2(0,T;\Ys)} =-\bip{\hat\theta,U_0\hat\theta}-\bnorm{C\Pi\hat\theta}_{L^2(0,T;\Ys)}^2.
\end{align}
In addition, due to the orthogonality of the projection, the "Pythagorean law" gives 
\begin{align} \label{eq:pythagoras}
 \norm{Ce^{A\cdot}\Delta\zeta}_{L^2(0,T;\Ys)}^2=& \bip{\hat\theta,U_0\hat\theta}+\bnorm{C\Pi\hat\theta}_{L^2(0,T;\Ys)}^2 \\ \nonumber & +\bip{\hat\theta,U_0\hat\theta}+\bnorm{Ce^{A\cdot}\Delta\zeta+C\Pi\hat\theta}_{L^2(0,T;\Ys)}^2.
 \end{align}
Inserting this and \eqref{eq:proj_ip} to \eqref{eq:ddk} gives
\begin{align} \label{eq:ddk_fin}
\frac{d}{d\kappa} \frac12 \norm{\hat z^-(T)}_{\Xs}^2 \Big|_{\kappa=0}=-2\bip{\hat\theta,U_0\hat\theta}-2\bnorm{Ce^{A\cdot}\Delta\zeta+C\Pi\hat\theta}_{L^2(0,T;\Ys)}^2.
\end{align}

{\bf Part 2:} What is left is to find an upper bound for the right hand side of \eqref{eq:ddk_fin} in terms of $\Delta\zeta$. It holds (for $\kappa=0$) that $\Pi(t)\hat\theta=\int_0^t e^{A(t-s)}B(s)\hat\theta ds$ and so --- recalling that $A$ is skew-adjoint and hence $\norm{e^{At}}_{\mathcal{L}(\Xs)}=1$ --- Young's inequality for convolutions gives 
\[
\bnorm{C\Pi\hat\theta}_{L^2(0,T;\Ys)} \le T\norm{C}_{\mathcal{L}(\Xs,\Ys)}\norm{B}_{L^2(0,T;\mathcal{L}(\Theta,\Xs))}\bnorm{\hat\theta}_{\Theta}.
\]
 Using first $U_0 \ge \delta I$ and the inequality $a^2+b^2 \ge \frac12 (a+b)^2$, and then the bound for $\bnorm{C\Pi\hat\theta}_{L^2(0,T;\Ys)}$ gives
\begin{align*}
& 2\bip{\hat\theta,U_0\hat\theta}+2\bnorm{Ce^{A\cdot}\Delta\zeta+C\Pi\hat\theta}_{L^2(0,T;\Ys)}^2 \\ & \ge  \left( \sqrt{\delta}\bnorm{\hat\theta}+\bnorm{Ce^{A\cdot}\Delta\zeta+C\Pi\hat\theta}_{L^2(0,T;\Ys)} \right)^2 \\ & \ge \left( \frac{\sqrt{\delta} \, \bnorm{C\Pi\hat\theta}_{L^2(0,T;\Ys)}}{T\norm{C}_{\mathcal{L}(\Xs,\Ys)}\norm{B}_{L^2(0,T;\mathcal{L}(\Theta,\Xs))}}+\bnorm{Ce^{A\cdot}\Delta\zeta+C\Pi\hat\theta}_{L^2(0,T;\Ys)} \right)^2  \\ & \ge M \left( \bnorm{C\Pi\hat\theta}_{L^2(0,T;\Ys)}+\bnorm{Ce^{A\cdot}\Delta\zeta+C\Pi\hat\theta}_{L^2(0,T;\Ys)} \right)^2  
\end{align*}
where $M=\min \left( \frac{\delta}{T^2\norm{C}_{\mathcal{L}(\Xs,\Ys)}^2\norm{B}_{L^2(0,T;\mathcal{L}(\Theta,\Xs))}^2},1 \right)$.
Finally, using the triangle inequality and the exact observability assumption to the last expression implies by \eqref{eq:ddk_fin},
\begin{align*}
\frac{d}{d\kappa} \frac12 \norm{\hat z^-(T)}_{\Xs}^2 \Big|_{\kappa=0} \le -M \norm{Ce^{A\cdot}\Delta\zeta}_{L^2(0,T;\Ys)}^2 \le -M\gamma^2 \norm{\Delta\zeta}_{\Xs}^2.
\end{align*}
Thus,
\[
\norm{g(\Delta\zeta)}_{\Xs}^2 \le \left( 1-2M\gamma^2\kappa+\mathcal{O}(\kappa^2) \right) \norm{\Delta\zeta}_{\Xs}^2
\]
from which the result follows by taking the square root and using the linear approximation $\sqrt{1+x}=1+x/2+\mathcal{O}(x^2)$.
\end{proof}

\begin{rmk}
The assumption $U_0 \ge \delta I$ is necessary since, in theory, there could exist $\Delta\zeta \in \Xs$ and $\hat\theta \in \Theta$ such that $C\Pi\hat\theta=-Ce^{A\cdot}\Delta\zeta$, meaning that the same output can be obtained with infinitely many initial state and parameter combinations. If it would then hold that $\bip{\hat\theta,U_0\hat\theta}=0$, the algorithm would be stuck. Obviously this is a rather pathological situation, so if the parameter is well identifiable, the method can be expected to work also with $U_0=0$. 
\end{rmk}

 \subsection{Optimization problem without feedback} \label{sec:nofb}
 
 Let us next consider the optimization problem
 \begin{equation} \label{eq:cost_nofb}
\min_{\zeta \in \Xs, \xi \in \Theta} \ip{\xi-\theta_0,U_0(\xi-\theta_0)}_{\Theta}+ \int_0^T\norm{y-C\hat z[\zeta,\xi]}_{\Ys}^2 d\tau
\end{equation}
where $\hat z[\zeta,\xi]$ is the solution to the open loop system
\[
\begin{cases}
\dot{\hat z}[\zeta,\xi]=A\hat z[\zeta,\xi]+B\xi, \\
\hat z[\zeta,\xi](0)=\zeta.
\end{cases}
\]
Obviously the method cannot work completely without the feedback term in the observer. However, as in \cite{OEBFN}, we can expect that if the gains $\kappa_j$ are taken to zero with a proper rate as the iterations advance, the estimates may converge to the minimizers of \eqref{eq:cost_nofb}. Since the mapping changes between iterations, we cannot utilize any fixed-point methods in proving the convergence, but a combination of the arguments in the proofs of Theorem~\ref{thm:attractive_lin} and \cite[Theorem~3.1]{OEBFN} will yield the result.

Denote again by $\zeta^o \in \Xs$ and $\theta^o \in \Theta$ the minimizer of \eqref{eq:cost_nofb} and denote by $z^o$ the corresponding optimal trajectory and $\chi:=y-Cz^o$. The optimality of $\zeta^o,\theta^o$ means that the residual $\chi$ satisfies
\begin{equation} \label{eq:residual}
\int_0^T e^{A^*s}C^*\chi(s)ds=0 \qquad \textup{and} \qquad U_0(\theta^o-\theta_0)+\int_0^T \Pi^*C^*\chi d\tau=0
\end{equation}
where $\Pi$ is the sensitivity operator satisfying $\dot\Pi=A\Pi+B$ and $\Pi(0)=0$. These conditions arise from differentiation of \eqref{eq:cost_nofb} with respect to $\zeta$ and $\xi$, respectively.

\begin{thm} \label{thm:nofb}
Assume that $A$ is skew-adjoint, the system is exactly observable, and $U_0 \ge \delta I>0$. Choose the observer gains in \eqref{eq:linear_joint} and \eqref{eq:bwd_lin} so that $\sum_{j=1}^{\infty} \kappa_j=\infty$ and $\sum_{j=1}^{\infty} \kappa_j^2<\infty$. Then the respective estimates $\hat z_j^-(T)$ and $\hat\theta_j$ for the initial state and the parameter given by \eqref{eq:linear_joint} and \eqref{eq:bwd_lin}, converge (in norm) to the optimal values $\zeta^o$ and $\theta^o$.
\end{thm}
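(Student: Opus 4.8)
The plan is to reduce the convergence of the parameter estimates to the convergence of the initial-state estimates $\hat z_j^-(T)$, and then to analyze the latter by combining the fixed-point estimate of Theorem~\ref{thm:attractive_lin} with a Robbins--Monro-type argument that exploits the summability conditions on $\kappa_j$, in the spirit of the proof of \cite[Theorem~3.1]{OEBFN}.

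First, the reduction. Because the backward observer \eqref{eq:bwd_lin} is initialized with the correction $\Pi_j(T)(\hat\theta_{j+1}-\hat\theta_j)$, the new estimates $\hat z_j^-(T)$ and $\hat\theta_{j+1}$ do not depend on $\hat\theta_j$ (exactly as argued in the feedback case), so it is enough to treat \eqref{eq:linear_joint}--\eqref{eq:bwd_lin} as a map $\hat z_{j-1}^-(T)\mapsto\hat z_j^-(T)$. Moreover, since the observer output is affine in the parameter, the Gauss--Newton step produces the exact minimizer of $J_j$, so $\hat\theta_{j+1}$ is an explicit, continuous function of $(\hat z_{j-1}^-(T),\kappa_j)$ (the operator $U(T)=U_0+\int_0^T\Pi_j^*C^*C\Pi_j$ is bounded below by $\delta I$). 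As $\kappa_j\to0$ (which holds since $\sum\kappa_j^2<\infty$), the cost $J_j$ converges to the open-loop cost in \eqref{eq:cost_nofb} with initial state fixed at the limit of $\hat z_{j-1}^-(T)$; hence if $\hat z_j^-(T)\to\zeta^o$ then $\hat\theta_j\to\theta^o$, because $\theta^o$ is precisely the $\xi$-minimizer of \eqref{eq:cost_nofb} with $\zeta=\zeta^o$ (the second condition in \eqref{eq:residual}). Thus it suffices to prove $\hat z_j^-(T)\to\zeta^o$.

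Second, the recursion. Denote by $f_\kappa:\Xs\to\Xs$ the affine map given by one iteration of \eqref{eq:linear_joint}--\eqref{eq:bwd_lin} with constant gain $\kappa$, and recall from Lemma~\ref{lma:fix_lin} that its fixed point is the minimizer $\zeta^o_\kappa$ of the feedback cost \eqref{eq:cost}; write $g_\kappa$ for the associated shifted (and hence linear) map $g_\kappa(\zeta)=f_\kappa(\zeta+\zeta^o_\kappa)-\zeta^o_\kappa$, for which Theorem~\ref{thm:attractive_lin} gives $\norm{g_\kappa(\zeta)}_{\Xs}\le(1-\alpha\kappa+\mathcal O(\kappa^2))\norm{\zeta}_{\Xs}$ with $\alpha>0$ independent of $\kappa$ and $\zeta$. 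Setting $e_j:=\hat z_j^-(T)-\zeta^o$ and using $\hat z_j^-(T)=f_{\kappa_j}(\hat z_{j-1}^-(T))=\zeta^o_{\kappa_j}+g_{\kappa_j}(\hat z_{j-1}^-(T)-\zeta^o_{\kappa_j})$ together with the linearity of $g_{\kappa_j}$, a short manipulation yields
\[
e_j=g_{\kappa_j}(e_{j-1})+(I-g_{\kappa_j})(\zeta^o_{\kappa_j}-\zeta^o).
\]
The first term is contractive by a factor $1-\alpha\kappa_j+\mathcal O(\kappa_j^2)$. For the second term, $\kappa\mapsto g_\kappa$ is smooth near $0$ with $g_0=I$, so $\norm{I-g_{\kappa_j}}=\mathcal O(\kappa_j)$; and $\kappa\mapsto\zeta^o_\kappa$ is smooth with $\zeta^o_0=\zeta^o$, since differentiating the first-order optimality conditions for \eqref{eq:cost} one may solve by the implicit function theorem because the Hessian is uniformly bounded below (Lemma~\ref{lma:convex}, which also applies for $\kappa=0$), so $\norm{\zeta^o_{\kappa_j}-\zeta^o}_{\Xs}=\mathcal O(\kappa_j)$. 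Hence there is a constant $C$ with $\norm{(I-g_{\kappa_j})(\zeta^o_{\kappa_j}-\zeta^o)}_{\Xs}\le C\kappa_j^2$ for all large $j$, and therefore
\[
\norm{e_j}_{\Xs}\le\big(1-\tfrac{\alpha}{2}\kappa_j\big)\norm{e_{j-1}}_{\Xs}+C\kappa_j^2
\]
for all $j$ large enough (using $\kappa_j\to0$ to absorb the $\mathcal O(\kappa_j^2)$ into $\tfrac{\alpha}{2}\kappa_j$ and to keep the factor in $(0,1)$).

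Finally, the convergence of the scalar sequence $a_j:=\norm{e_j}_{\Xs}$ follows from the standard stochastic-approximation lemma: if $a_j\ge0$ satisfies $a_j\le(1-\gamma_j)a_{j-1}+b_j$ with $\gamma_j\in[0,1]$, $\sum\gamma_j=\infty$ and $b_j/\gamma_j\to0$, then $a_j\to0$; here $\gamma_j=\tfrac{\alpha}{2}\kappa_j$ (so $\sum\gamma_j=\infty$) and $b_j=C\kappa_j^2$ (so $b_j/\gamma_j=\tfrac{2C}{\alpha}\kappa_j\to0$). This gives $\hat z_j^-(T)\to\zeta^o$, and hence $\hat\theta_j\to\theta^o$ by the first step. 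I expect the main obstacle to be the second step: the forcing term in the recursion must be $\mathcal O(\kappa_j^2)$ rather than merely $\mathcal O(\kappa_j)$ — this is exactly what makes it summable against $\gamma_j$ — and this second-order smallness is the cancellation of the linear-in-$\kappa_j$ contribution of the residual $\chi$ to $e_j$, which rests on the optimality conditions \eqref{eq:residual} (equivalently, on $I-g_{\kappa_j}$ and $\zeta^o_{\kappa_j}-\zeta^o$ being \emph{separately} first order in $\kappa_j$). Carrying this out with constants uniform in $\kappa_j$ — e.g. by redoing the first-order expansion of the proof of Theorem~\ref{thm:attractive_lin} while keeping $y$ and $\theta_0$ and invoking $\int_0^T e^{A^*s}C^*\chi\,ds=0$ — is the technical heart of the argument.
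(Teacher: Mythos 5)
Your proposal is correct, but it takes a genuinely different route from the paper's proof. The paper does not pass through the fixed points $\zeta^o_{\kappa_j}$ of the individual feedback problems at all: it tracks the errors $\varepsilon^+=z^o-\hat z_j^+$ and $\varepsilon^-(t)=z^o(T-t)-\hat z_j^-(t)$ relative to the \emph{open-loop} optimal trajectory through one forward--backward sweep, obtains $\varepsilon^-(T)=\varepsilon^+(0)-2\kappa\int_0^T e^{-(A-\kappa C^*C)(T-s)}C^*\big(C\varepsilon^-(s)+\chi(T-s)\big)\,ds$, and kills the first-order-in-$\kappa$ contribution of the residual $\chi$ directly via the stationarity condition $\int_0^T e^{A^*s}C^*\chi\,ds=0$ from \eqref{eq:residual}; this yields $\norm{\varepsilon^-(T)}\le(1-\alpha\kappa_j+\mathcal O(\kappa_j^2))\norm{\varepsilon^+(0)}$ with the $\mathcal O(\kappa_j^2)$ term uniform in $\varepsilon^+(0)$ and $\chi$, after which it unrolls the product--sum recursion exactly as you do in your last step. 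Your decomposition $e_j=g_{\kappa_j}(e_{j-1})+(I-g_{\kappa_j})(\zeta^o_{\kappa_j}-\zeta^o)$ instead reuses Lemma~\ref{lma:fix_lin} and Theorem~\ref{thm:attractive_lin} as black boxes for each fixed gain, at the price of one extra ingredient the paper never needs: the regularity of $\kappa\mapsto\zeta^o_\kappa$ at $\kappa=0$. Your implicit-function-theorem justification for this is sound --- the normal equations are linear with coefficient operator $\Gamma_\kappa^*\Gamma_\kappa$ uniformly coercive for $\kappa$ in a bounded set by Lemmas~\ref{lma:obs_pres} and~\ref{lma:convex}, and $\Gamma_\kappa$ and the affine part depend smoothly on $\kappa$ via the perturbation series for $e^{(A-\kappa C^*C)t}$ --- and in fact you need less than you prove: since $\norm{I-g_{\kappa_j}}=\mathcal O(\kappa_j)$ already, mere continuity $\zeta^o_{\kappa_j}\to\zeta^o$ suffices to make the forcing $o(\kappa_j)$, which is all the Robbins--Monro lemma requires. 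What your route buys is modularity (no need to redo the sweep-level expansion with the data terms $y$, $\theta_0$ present); what the paper's route buys is that the uniformity of all constants in the data is established in one pass and the role of the optimality condition \eqref{eq:residual} is completely explicit. Your handling of the parameter convergence (continuity of the exact Gauss--Newton minimizer in the initial state and in $\kappa$, with $U(T)\ge\delta I$) is also more explicit than the paper's, which leaves that reduction implicit.
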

\begin{proof}
Denote $\varepsilon^+:=z^o-\hat z_j^+$ (for some index $j$ that we omit in the proof) and $\varepsilon^-(t):=z^o(T-t)-\hat z_j^-(t)$. These error terms follow the dynamics
\[
\dot\varepsilon^+=(A-\kappa C^*C)\varepsilon^+-\kappa C^* \chi +B(\theta^o-\hat\theta)
\]
and
\[
\dot\varepsilon^-(t)=(-A-\kappa C^*C)\varepsilon^-(t)-\kappa C^* \chi(T-t)-B(T-t)(\theta^o-\hat\theta).
\]
As noted before, the old parameter estimate $\hat\theta_j$ has no effect on the next estimates, and since we are not interested in the trajectory $\hat z^+(t)$, we can put the same $\hat\theta$ --- which is the new parameter estimate --- in both equations. 

Following the proof of Theorem~\ref{thm:attractive_lin}, the backward part is re-organized to
\begin{align*}
\dot\varepsilon^-(t)=&-(A-\kappa C^*C)\varepsilon^-(t)+\kappa C^*\chi(T-t)-B(T-t)(\theta^o-\hat\theta) \\ & -2\kappa C^*C\varepsilon^-(t)-2\kappa C^*\chi(T-t).
\end{align*}
Neglecting the last two terms gives exactly the time-inverted equation for $\varepsilon^+$. In addition, if $\kappa=0$, the trajectories of the forward and backward equations are the same, namely $\varepsilon^-(T-t)=\varepsilon^+(t)=e^{At}\varepsilon^+(0)+\Pi(t)(\theta^o-\hat\theta)$. 
Further, it holds that
\[
\varepsilon^-(T)=\varepsilon^+(0)-2\kappa \int_0^T e^{-(A-\kappa C^*C)(T-s)}C^*\big( C\varepsilon^-(s)+\chi(T-s)\big)ds.
\]
The only difference here compared to \eqref{eq:confer} in the proof of Theorem~\ref{thm:attractive_lin} is the addition of the noise term $\chi$. However, this term appears in the derivative $\frac{d}{d\kappa}\varepsilon^-(T) \big|_{\kappa=0}$ and since $\kappa=0$, the contribution of this noise term vanishes because of \eqref{eq:residual} and $-A=A^*$.

Pursuing as in the proof of Theorem~\ref{thm:attractive_lin}  yields
\[
\norm{\varepsilon^-(T)}_{\Xs} \le \big(1-\alpha\kappa + \mathcal{O}(\kappa^2)\big)\norm{\varepsilon^+(0)}_{\Xs}
\]
where the $\mathcal{O}(\kappa^2)$-term is uniformly bounded with respect to $\varepsilon^+(0)$ and $\chi$. Using this bound repeatedly for $j=1,...,k$ gives
\[
\bnorm{\varepsilon_k^-(T)}_{\Xs} \le \prod_{j=1}^k (1-\alpha \kappa_j) \norm{\varepsilon_1^+(0)}_{\Xs}+\mathcal{O}(1) \sum_{j=1}^k\kappa_j^2 \prod_{i=j+1}^k (1-\alpha\kappa_i).
\]
The product terms are bounded by
\[
\prod_{i=j+1}^k (1-\alpha\kappa_i)=\exp \left( \sum_{i=j+1}^k \ln (1-\alpha \kappa_i) \right) \le \exp\left(-\alpha \! \sum_{i=j+1}^k \kappa_i \right)
\]
which converges to zero for any $j$ as $k \to \infty$, and so by the assumptions on the gains, $\bnorm{\varepsilon_k^-(T)}_{\Xs} \to 0$ as $k \to \infty$ concluding the proof.
\end{proof}

As in \cite{OEBFN}, this result can be made to hold also for essentially skew-adjoint and dissipative (ESAD) generators, meaning $\dom(A)=\dom(A^*)$ and $A+A^*=-Q$ for some bounded $Q \ge 0$. In that case the observer has to be corrected by replacing the feedback operator $\kappa C^*$ by $\kappa P(t)C^*$ in the forward observer and by $\kappa P(T-t)C^*$ in the backward observer where $P(t)=e^{At}e^{A^*t}$. For the required modifications in the proofs, see \cite[Lemma~3.1 and Theorem~3.2]{OEBFN}. The result holds for small enough $Q$. Typically we do not want to compute the full operator $P(t)$ as it would severely increase the computational cost of the method. In some cases the operator can be feasibly approximated. For an example of such approximation, see also Section~4 in \cite{OEBFN}, where it is shown that for the wave equation with constant dissipation $u_{tt}=\Delta u-\epsilon u_t$ with Dirichlet boundary conditions, it holds that $e^{A^*t}e^{At} \approx e^{-\epsilon t}I$. The modified proof relies on results on strongly continuous perturbations of semigroup generators, that can be found in \cite{Chen} by Chen and Weiss.

\section{Bilinear parameter estimation} \label{sec:bilin}

This section is devoted to studying the bilinear parameter estimation problem, namely consider a system whose dynamics are governed by equations
\begin{equation} \label{eq:bilin_syst}
\begin{cases}
\dot z=A(\theta)z+f+\eta, \\
y=Cz+\nu, \\
z(0)=z_0.
\end{cases}
\end{equation}
Here $f$ is a known load term and $\eta$ and $\nu$ are unknown model error and noise terms. We assume that the structure of the main operator is $A(\theta)=A_0+\Delta A \cdot \theta$ where the parameter belongs to a Hilbert space, $\theta \in \Theta$.

We assume that $A(\theta)$ is a skew-adjoint operator for all $\theta$. In many cases, the natural norm for the state space depends on the parameters. This is the case in our example on the inverse potential problem for the wave equation treated in Section~\ref{sec:wave}. In that case, it may be that $A(\theta)$ is skew-adjoint only if the state space is equipped with the norm computed using the same parameter $\theta$. Therefore we must be very careful when computing any norms, inner products, or adjoint operators in $\Xs$. In our proofs we only encounter small deviations from the optimal parameter, and we can use the $\Xs$-norm corresponding to this parameter. Therefore the convergence results also hold in this norm. Let us list the assumptions needed in this section. We remark, however, that the method can be used even though these assumptions are not satisfied as long as the BFN method is stable. The optimality results do not hold in that case, but the parameter estimate can nevertheless be expected to be reasonable. 

\begin{assu} \label{assu:bilin}
Make the following assumptions:
\begin{itemize}
\item[A1] The norm in $\Xs$ may depend on the parameter $\theta$ but norms corresponding to different parameters are equivalent.

\item[A2] The operator $A(\theta)$ is skew-adjoint when $\Xs$ is equipped with the norm corresponding to the parameter $\theta$.

\item[A3] The operator $\Delta A(\theta)=A(\xi+\theta)-A(\xi)$ is bounded in $\Xs$ and does not depend on $\xi\in\Theta$. It holds that $\norm{\Delta A(\theta)}_{\mathcal{L}(\Xs)} \le M \norm{\theta}_{\Theta}$ for some $M >0$. Denote the smallest possible $M$ by $\norm{\Delta A}$.

\item[A4] The norm in the output space $\Ys$ does not depend on $\theta$.
\end{itemize}
\end{assu}

The $j^{\textup{th}}$ (forward) iteration of the algorithm for the bilinear parameter estimation problem is given essentially by the same equations as in the linear case except for the computation of the sensitivity operator $\Pi$. That is, (set $\hat\theta_1=\theta_0$):
\begin{equation} \label{eq:bilin_joint}
 \begin{cases}
 \dot{\hat z}_j^+=A(\hat\theta_j)\hat z_j^+ +K_j(y-C\hat z_j^+)+f, \quad \hat z_j^+(0)=\hat z_j^-(T) \\
 \dot{\Pi}= (A(\hat\theta_j)-K_jC)\Pi + \Lambda \hat z_j^+, \quad \Pi(0)=0,     \\
 \dot{U}= \big(C\Pi\big)^*C\Pi, \quad U(0)=U_0, \\
 \dot{\xi}= \big(C\Pi\big)^*\big(y-C\hat z_j^+ \big), \quad \xi(0)=0, \\
 \hat\theta_{j+1}=\hat\theta_j-U(T)^{-1}\big(U_0(\hat\theta_j-\theta_0)-\xi(T) \big)   
  \end{cases}
 \end{equation}
where $\Lambda:\Xs \to \mathcal{L}(\Theta,\Xs)$ is defined for $h \in \Xs$ and $\xi \in \Theta$  through
\[
\big[ \Lambda h \big] \xi := \big[ \Delta A \cdot \xi \big] h.
\] 
   The backward state observer is defined "forward in time" by
   \begin{equation} \label{eq:bwd_bilin}
 \begin{cases}
 \dot{\hat z}_j^-(t)=-A(\hat\theta_{j+1})\hat z_j^-(t) +K_j(y(T-t)-C\hat z_j^-(t))-f(T-t), \\ \hat z_j^-(0)=\hat z_j^+(T)+\Pi_j(T)(\hat\theta_{j+1}-\hat\theta_j), \\ \hat z_{j+1}=\hat z^-(T).
 \end{cases}
 \end{equation} 
 Again with such definition, $\hat z_j^-(t)$ is an estimate for $z(T-t)$. Here the feedback term $K_j$ depends on the iteration but that does not need to be so. In the algorithm we have replaced $\Pi^*C^*$ by $\big(C\Pi\big)^*$ to avoid computing adjoints in the state space $\Xs$ whose inner product may depend on the parameter $\theta$.

Again we intend to show that under some assumptions, using the colocated feedback $K_j=\kappa C^*$ with constant gain $\kappa > 0$, the initial state and parameter estimates converge to the minimizer of the cost function
\begin{equation} \label{eq:cost_bilin}
\min_{\zeta \in \Xs, \xi \in \Theta} \ip{\xi-\theta_0,U_0(\xi-\theta_0)}_{\Theta}+ \int_0^T\norm{y-C\hat z[\zeta,\xi]}_{\Ys}^2 d\tau
\end{equation}
where $\hat z[\zeta,\xi]$ is the solution to
\[
\begin{cases}
\dot{\hat z}[\zeta,\xi]=A(\xi)\hat z[\zeta,\xi]+f+\kappa C^*(y-C\hat z[\zeta,\xi]), \\
\hat z[\zeta,\xi](0)=\zeta.
\end{cases}
\]
Obviously such nonlinear optimization problem may also have local minima, and as the Gauss--Newton method in general, the presented algorithm may get stuck into a local minimum. In general, it is difficult to say much about nonlinear least squares optimization problems. In \cite[Section~5.1.3]{Chavent} it is shown that for sufficiently smooth problems (such as the bilinear parameter estimation),  by bounding the set over which the minimization is performed, and adding a Tikhonov type regularization term with big enough coefficient, the problem will have a unique solution. However, in our case the minimization variable is $(\zeta,\xi)$ but the regularization term contains only the parameter $\xi$. Nevertheless, assuming exact observability, by a similar computation as in the proof of Lemma~\ref{lma:convex}, using similar techniques as in \cite[Section~5.1.3]{Chavent}, it can be shown that the regularization only in the parameter space suffices to guarantee the existence of a unique solution, if $U_0$ is big enough and the set of allowed variables is restricted to a small enough set.

In what follows, we simply assume the existence of a minimizer. In the computations below we only need the first order optimality conditions meaning that they hold also for local minima.
Denote again by $\zeta^o \in \Xs$ and $\theta^o \in \Theta$ the solution to the optimization problem \eqref{eq:cost_bilin}, the corresponding trajectory by $z^o=\hat z[\zeta^o,\theta^o]$, and the output residual by $\chi=y-Cz^o$.
As in the linear case, the method given by equations \eqref{eq:bilin_joint} and \eqref{eq:bwd_bilin} can be interpreted as a mapping $h:(\hat z_j,\hat\theta_j) \mapsto (\hat z_{j+1},\hat\theta_{j+1})$ and, again, the optimal point $(\zeta^o,\theta^o)$ is a fixed point of this mapping.
\begin{lma}
Make the assumptions A1--A4 in Assumption~\ref{assu:bilin}.
Then it holds that $h(\zeta^o,\theta^o)=(\zeta^o,\theta^o)$.
\end{lma}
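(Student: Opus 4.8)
The strategy mirrors the proof of Lemma~\ref{lma:fix_lin} in the linear case. The claim is that when the iteration \eqref{eq:bilin_joint}--\eqref{eq:bwd_bilin} is started from the optimal pair $(\zeta^o,\theta^o)$, it returns that pair. First I would observe that with $\hat z_j = \zeta^o$ and $\hat\theta_j = \theta^o$, the forward observer $\hat z_j^+$ satisfies exactly the same ODE as $z^o = \hat z[\zeta^o,\theta^o]$ with the same initial condition (since $A(\hat\theta_j) = A(\theta^o)$ and the feedback term $\kappa C^*(y - C\hat z_j^+)$ matches), so $\hat z_j^+ = z^o$ on $[0,T]$. Consequently the sensitivity operator $\Pi$, the accumulator $U$, and the vector $\xi$ are computed along the optimal trajectory, with $y - C\hat z_j^+ = \chi$.

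The key step is then to show that the Gauss--Newton update leaves $\hat\theta$ fixed, i.e. $\hat\theta_{j+1} = \theta^o$. This amounts to $U_0(\theta^o - \theta_0) - \xi(T) = 0$, where $\xi(T) = \int_0^T (C\Pi)^* C\chi \, d\tau = \int_0^T \Pi^* C^* \chi \, d\tau$. This is precisely the first-order optimality condition for \eqref{eq:cost_bilin} with respect to $\xi$, evaluated at the optimum — the bilinear analogue of the second equation in \eqref{eq:residual}. Here I must be careful: the sensitivity operator $\Pi$ appearing in \eqref{eq:bilin_joint} (with $\dot\Pi = (A(\hat\theta_j) - K_jC)\Pi + \Lambda\hat z_j^+$, where $[\Lambda h]\xi = [\Delta A\cdot\xi]h$) is, by construction of $\Lambda$, the Fréchet derivative of the closed-loop state estimate with respect to the parameter, evaluated at $(\zeta^o,\theta^o)$; so its integral against $C^*\chi$ is exactly the parameter-gradient of the cost in \eqref{eq:cost_bilin}. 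Writing out that derivative and matching it to $\xi(T)$ is the one routine-but-essential computation.

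Finally, with $\hat\theta_{j+1} = \theta^o = \hat\theta_j$, the correction term $\Pi_j(T)(\hat\theta_{j+1} - \hat\theta_j)$ in \eqref{eq:bwd_bilin} vanishes, so the backward observer $\hat z_j^-$ starts from $\hat z_j^+(T) = z^o(T)$ and runs $\dot{\hat z}_j^-(t) = -A(\theta^o)\hat z_j^-(t) + \kappa C^*(y(T-t) - C\hat z_j^-(t)) - f(T-t)$. Exactly as in Lemma~\ref{lma:fix_lin}, the backward error $\varepsilon^-(t) := z^o(T-t) - \hat z_j^-(t)$ solves $\dot\varepsilon^-(t) = (-A(\theta^o) - \kappa C^*C)\varepsilon^-(t) - 2\kappa C^*\chi(T-t)$ with $\varepsilon^-(0) = 0$, using $y = Cz^o + \chi$ and skew-adjointness of $A(\theta^o)$ (here via assumption A2, working in the $\Xs$-norm associated with $\theta^o$, which by A1 is equivalent to all others). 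Hence $\varepsilon^-(T) = -2\kappa\int_0^T e^{(A(\theta^o)^* - \kappa C^*C)s}C^*\chi(s)\,ds$, and this integral vanishes because it is the initial-state gradient of \eqref{eq:cost_bilin} at the optimum — the bilinear analogue of the first condition in \eqref{eq:residual}. Therefore $\hat z_{j+1} = \hat z_j^-(T) = \zeta^o$, which together with $\hat\theta_{j+1} = \theta^o$ gives $h(\zeta^o,\theta^o) = (\zeta^o,\theta^o)$.

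The main obstacle is purely bookkeeping: verifying that $\Pi$ as defined by the $\Lambda$-term really is the correct parameter sensitivity of the \emph{closed-loop} observer state, and that $(C\Pi)^*C^*\chi$ integrates to the genuine gradient of \eqref{eq:cost_bilin}, all while keeping track of which $\Xs$-inner-product is being used when forming adjoints — assumptions A1--A4 are exactly what make this legitimate, since near the optimum all quantities can be computed in the fixed norm corresponding to $\theta^o$.
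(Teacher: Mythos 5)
Your proposal is correct and follows essentially the same route as the paper: the forward observer started from $(\zeta^o,\theta^o)$ reproduces $z^o$, the Gauss--Newton increment $U_0(\theta^o-\theta_0)-\xi(T)$ is the parameter-gradient of \eqref{eq:cost_bilin} at the optimum and hence vanishes, and the backward phase is handled exactly as in Lemma~\ref{lma:fix_lin} using the initial-state optimality condition, with all adjoints taken in the $\Xs$-norm associated with $\theta^o$. The only blemish is the stray $C$ in ``$\xi(T)=\int_0^T (C\Pi)^*C\chi\,d\tau$'' (it should be $\int_0^T (C\Pi)^*\chi\,d\tau$, since $\chi=y-Cz^o$ already lies in $\Ys$), which your subsequent rewriting as $\int_0^T \Pi^*C^*\chi\,d\tau$ shows is just a typo.
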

\begin{proof}
If $\hat\theta_j=\theta^o$ and $\hat z_j=\zeta^o$ in \eqref{eq:bilin_joint}, it is clear that $\hat z^+=z^o$. Then using $y=Cz^o+\chi$, we obtain directly from \eqref{eq:bilin_joint}
\begin{align*}
\hat\theta_{j+1}&=\theta^o-U(T)^{-1}\left( U_0(\theta^o-\theta_0)-\int_0^T\big(C\Pi\big)^*\chi d\tau \right)=\theta^o
\end{align*}
because the term inside the parentheses is the gradient of the cost function in \eqref{eq:cost_bilin} with respect to the parameter $\xi$ at the optimum, and hence it is zero. That $\hat z^-(T)=\zeta^o$ is then shown exactly as in the proof of Lemma~\ref{lma:fix_lin}. Here it does not matter that the $\Xs$ inner product depends on the parameter since every trajectory is computed using $\theta^o$.
\end{proof}

In the main result of this section, it is shown that the optimum is also an attractive fixed point (under some restrictions on the residual $\chi$). The proof is based on a classical result (see, e.g., \cite[Chapter~22]{Ostrowski}) which says that if the spectral radius of the Fr\'echet derivative at a fixed point is smaller than one, the fixed point is attractive. 

The Fr\'echet derivative is computed with respect to the augmented variable containing the initial state and the actual parameter. Therefore the derivative operator has a 2-by-2 blockwise structure. We shall begin by showing an auxiliary perturbation result concerning the spectral radius of such block operator.
\begin{lma} \label{lma:rad}
Consider the block operator $\bm{D & E \\ F & G}$ from some product of two Hilbert spaces to itself where all the blocks are bounded operators in the respective spaces. Then the spectral radius of the block operator satisfies
\[
\rho \left( \sm{D & E \\ F & G} \right) \le \max\big(\rho(D),\rho(G)\big)+\sqrt{\norm{E}\norm{F}}.
\]
\end{lma}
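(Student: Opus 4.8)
The plan is to show that every $\lambda$ with $|\lambda|>\max\bigl(\rho(D),\rho(G)\bigr)+\sqrt{\norm{E}\,\norm{F}}$ lies in the resolvent set of $T:=\sm{D & E \\ F & G}$; then $\sigma(T)$ is contained in the closed disc of that radius and the estimate follows. Fix such a $\lambda$. Since $|\lambda|>\rho(D)$ and $|\lambda|>\rho(G)$, the operators $\lambda I-D$ and $\lambda I-G$ are both boundedly invertible, so I would apply the Schur complement criterion for block operators on a product of Hilbert spaces (the same tool used in the proof of Lemma~\ref{lma:convex}): $\lambda I-T$ is invertible if and only if the Schur complement
\[
\lambda I-D-E(\lambda I-G)^{-1}F=(\lambda I-D)\bigl(I-(\lambda I-D)^{-1}E(\lambda I-G)^{-1}F\bigr)
\]
is invertible, and for the latter it suffices that $\bnorm{(\lambda I-D)^{-1}E(\lambda I-G)^{-1}F}<1$.

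The heart of the matter is then the resolvent bound $\norm{(\lambda I-D)^{-1}}\le\bigl(|\lambda|-\rho(D)\bigr)^{-1}$, together with the analogous bound for $G$. Granting these,
\[
\bnorm{(\lambda I-D)^{-1}E(\lambda I-G)^{-1}F}\le\frac{\norm{E}\,\norm{F}}{\bigl(|\lambda|-\rho(D)\bigr)\bigl(|\lambda|-\rho(G)\bigr)},
\]
and the choice of $\lambda$ makes each factor in the denominator strictly larger than $\sqrt{\norm{E}\,\norm{F}}$, so the right-hand side is $<1$; hence $\lambda\notin\sigma(T)$, and taking the supremum over $\sigma(T)$ gives the claim. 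The degenerate case $\norm{E}=0$ or $\norm{F}=0$ is handled separately and trivially, since then $T$ is block triangular and $\rho(T)=\max(\rho(D),\rho(G))$.

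The step I expect to be the obstacle is exactly the resolvent bound $\norm{(\lambda I-D)^{-1}}\le(|\lambda|-\rho(D))^{-1}$: it holds in the distance form $\norm{(\lambda I-D)^{-1}}=\dist(\lambda,\sigma(D))^{-1}$ when $D$ is normal (in particular self-adjoint), in which case $\dist(\lambda,\sigma(D))\ge|\lambda|-\rho(D)$ and the argument closes; but for a general bounded $D$ the resolvent norm can strictly exceed $\dist(\lambda,\sigma(D))^{-1}$, and then this route yields only the weaker inequality with $\norm{D},\norm{G}$ in place of $\rho(D),\rho(G)$. That weaker inequality, incidentally, has a quick norm-only proof: on the product space use the equivalent norm $\norm{(x,y)}_\mu:=\max\bigl(\mu\norm{x},\norm{y}\bigr)$ for $\mu>0$, estimate $\norm{T}_\mu\le\max\bigl(\norm{D}+\mu\norm{E},\,\norm{G}+\mu^{-1}\norm{F}\bigr)$ by the triangle inequality, optimize at $\mu=\sqrt{\norm{F}/\norm{E}}$, and recall $\rho(T)\le\norm{T}_\mu$. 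Replacing the factor-space norms by $D$- and $G$-adapted ones so as to push $\norm{D},\norm{G}$ down toward $\rho(D),\rho(G)$ does not help directly, because such a renorming simultaneously inflates $\norm{E}$ and $\norm{F}$ by condition-number factors that one cannot control along with pinning $\norm{D},\norm{G}$ near the spectral radii; so the resolvent estimate above, under a normality-type assumption on $D$ and $G$, is the natural way to obtain the sharp form.
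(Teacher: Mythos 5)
Your route is essentially the paper's: both reduce invertibility of $\lambda I-\sm{D & E \\ F & G}$ to invertibility of Schur complements and then estimate the coupling term $E(\lambda-G)^{-1}F$ (the paper treats it as a bounded perturbation of $D$ and invokes $\rho(A+B)\le\rho(A)+\norm{B}$; you factor out $\lambda-D$ and use a Neumann series, which is a cosmetic difference). The obstacle you flag is genuine, and it is not yours alone: the paper's own proof uses, without justification, both the resolvent bound $\norm{(\lambda-G)^{-1}}\le(|\lambda|-\rho(G))^{-1}$ and the perturbation inequality $\rho(A+B)\le\rho(A)+\norm{B}$, and neither holds for general bounded operators. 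In fact the lemma as stated is false. Take the product space $\mathbb{C}^2\times\mathbb{C}$ with $D=\sm{0 & M \\ 0 & 0}$, $E=\sm{0\\1}$, $F=\sm{1 & 0}$, $G=0$: then $\rho(D)=\rho(G)=0$ and $\norm{E}=\norm{F}=1$, so the claimed bound is $1$, yet the resulting $3\times3$ matrix $T$ satisfies $T^3=MI$, hence $\rho(T)=M^{1/3}$, which is arbitrarily large. This is exactly the failure mechanism you anticipate: here $\norm{(\lambda-D)^{-1}}\approx M/|\lambda|^2$, far exceeding $(|\lambda|-\rho(D))^{-1}$.

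So the honest conclusion is the one you reach yourself: under a normality-type hypothesis on $D$ and $G$ your argument closes and gives the stated inequality, while in general one only obtains the norm version $\rho(T)\le\max\big(\norm{D},\norm{G}\big)+\sqrt{\norm{E}\norm{F}}$, for which your weighted-norm proof (take $\norm{(x,y)}_\mu=\max(\mu\norm{x},\norm{y})$, bound $\norm{T}_\mu$ by the triangle inequality, optimize $\mu=\sqrt{\norm{F}/\norm{E}}$, and use $\rho(T)\le\norm{T}_\mu$) is correct and complete. That weaker version is in fact all that the application requires: in the proof of Theorem~\ref{thm:attractive_bilin} the block $D$ is controlled through its operator norm, $\norm{D}\le 1-(\alpha-\tfrac12 M\norm{\chi})\kappa+\mathcal{O}(\kappa^2)$, and $G$ through $\norm{G}\le M\norm{\chi}_{L^2}$, so substituting your corrected norm-based lemma leaves the convergence result intact. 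Your proposal is therefore more careful than the paper's proof on precisely the point where the paper's proof is wrong.
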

\begin{proof}
Fix $\lambda \in \mathbb{C}$ so that $|\lambda| > \max\big(\rho(D),\rho(G)\big)$. Then $\sm{\lambda-D & -E \\ -F & \lambda-G}$ is invertible if $\lambda-D-E(\lambda-G)^{-1}F$ and $\lambda-G-F(\lambda-D)^{-1}E$ are invertible. We consider $E(\lambda-G)^{-1}F$ and $F(\lambda-D)^{-1}E$ as bounded perturbations to $D$ and $G$, respectively. 

If $B$ is bounded, it holds that $\rho(A+B) \le \rho(A)+\norm{B}$. Now
\[
\norm{E(\lambda-G)^{-1}F} \le \norm{E}\norm{F}\norm{(\lambda-G)^{-1}} \le \frac{\norm{E}\norm{F}}{|\lambda|-\rho(G)}
\]
and similarly $\norm{F(\lambda-D)^{-1}E} \le \frac{\norm{E}\norm{F}}{|\lambda|-\rho(D)}$. If
\begin{equation} \label{eq:sec}
\norm{E}\norm{F} < \big(|\lambda|-\rho(D)\big)\big(|\lambda|-\rho(G)\big),
\end{equation}
then $\norm{E(\lambda-G)^{-1}F} < |\lambda|-\rho(D)$ and $\norm{F(\lambda-D)^{-1}E} < |\lambda|-\rho(G)$, which implies invertibility of $\lambda-D-E(\lambda-G)^{-1}F$ and $\lambda-G-F(\lambda-D)^{-1}E$, and hence also $\sm{\lambda-D & -E \\ -F & \lambda-G}$. By solving  \eqref{eq:sec} with respect to $|\lambda|$, it follows that the inequality holds at least if
\[
|\lambda| > \frac{\rho(D)+\rho(G)}2 + \frac12 \sqrt{(\rho(D)-\rho(G))^2+4\norm{E}\norm{F}}.
\]
Using $\sqrt{a+b} \le \sqrt{a}+\sqrt{b}$ for $a,b \ge 0$, it is clear that this inequality follows from 
\[
|\lambda| > \max\big(\rho(D),\rho(G)\big)+\sqrt{\norm{E}\norm{F}}
\]
finishing the proof.
\end{proof}

Finally, we are ready for the main result of this section which is shown by finding an expression for the Fr\'echet derivative of the mapping $h$ by linearizing equations \eqref{eq:bilin_joint} and \eqref{eq:bwd_bilin} with respect to the optimum, and using then the previous lemma.

\begin{thm} \label{thm:attractive_bilin}
Make the assumptions A1--A4 in Assumption~\ref{assu:bilin} and, additionally, assume that the system is exactly observable for the parameter $\theta^o$. Assume that the residual $\chi$ is small (see Remark~\ref{rmk:chi}). Then the optimum $(\zeta^o,\theta^o)$ is an attractive fixed point of the algorithm \eqref{eq:bilin_joint} and \eqref{eq:bwd_bilin}.
\end{thm}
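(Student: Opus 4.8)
The plan is to follow the blueprint of Theorem~\ref{thm:attractive_lin} but now track the joint variable $(\Delta\zeta,\Delta\theta) := (\hat z_j - \zeta^o, \hat\theta_j - \theta^o)$ and compute the Fr\'echet derivative $h'(\zeta^o,\theta^o)$ as a $2$-by-$2$ block operator $\sm{D & E \\ F & G}$ acting on $\Xs \times \Theta$. First I would linearize equations \eqref{eq:bilin_joint} and \eqref{eq:bwd_bilin} around the optimum. Since the Gauss--Newton step gives the \emph{exact} optimal parameter for the current state estimate when the output depends affinely on $\xi$ — and here it depends affinely only to first order — the block $D$ (the state-to-state part, holding $\Delta\theta$ fixed at the value forced by the Gauss--Newton step) should, after setting $\kappa$ to play its role as in Theorem~\ref{thm:attractive_lin}, contract: $\rho(D) \le 1 - \alpha\kappa + \mathcal O(\kappa^2)$ with the same $\alpha$-type constant, because the linearization of $\dot\Pi = (A(\hat\theta_j)-K_jC)\Pi + \Lambda\hat z_j^+$ at the optimum reproduces the role played by $B$ in the linear case (with $B$ replaced by $\Lambda z^o$). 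The block $G$ (parameter-to-parameter) I expect to be contractive as well, again by the Gauss--Newton exactness, but only up to terms controlled by $\chi$: the second-order dependence of the output on $\xi$ contributes a term proportional to the residual $\chi$, so $\rho(G)$ is small only if $\chi$ is small (this is the content of the forthcoming Remark~\ref{rmk:chi}).

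Next I would estimate the off-diagonal blocks $E$ and $F$. These encode how a state perturbation feeds into the parameter update and vice versa. The key point is that $\|E\|$ and $\|F\|$ are each bounded by quantities involving $\|\Delta A\|$, $\|C\|$, $\|\Pi(T)\|$, $U_0^{-1}$, and — crucially — the residual $\chi$: a pure state perturbation with no residual would leave the first-order optimality condition for $\xi$ untouched (as in the linear case, where the new parameter estimate is independent of the old state estimate modulo the correction \eqref{eq:bwd_bilin}), so the coupling is genuinely $O(\chi)$ or at least small when $\chi$ is small. Then I apply Lemma~\ref{lma:rad}:
\[
\rho\!\left( h'(\zeta^o,\theta^o) \right) \le \max\big(\rho(D),\rho(G)\big) + \sqrt{\|E\|\,\|F\|} \le \max\big(1-\alpha\kappa+\mathcal O(\kappa^2),\, \mathcal O(\chi)\big) + \mathcal O(\sqrt{\chi}),
\]
which is strictly below $1$ provided $\kappa$ is chosen small enough (so the $\mathcal O(\kappa^2)$ term does not overwhelm $\alpha\kappa$) and $\chi$ is small enough (so the $G$ and off-diagonal contributions stay below the gap $\alpha\kappa$). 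Finally, invoke the classical fixed-point theorem (\cite[Chapter~22]{Ostrowski}): spectral radius of the derivative below one at a fixed point implies the fixed point is attractive, giving local convergence of $(\hat z_j,\hat\theta_j) \to (\zeta^o,\theta^o)$.

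The main obstacle I anticipate is twofold. First, bookkeeping the parameter-dependent norm on $\Xs$: the adjoints, the skew-adjointness of $A(\theta)$, and the identity $-A=A^*$ used in the linear proof all hold only in the $\Xs$-norm attached to the \emph{right} parameter, so I must be careful that every trajectory and every inner product in the linearization is computed in the $\theta^o$-norm, and lean on assumption~A1 (equivalence of norms) to transfer estimates — this is exactly the delicacy flagged in the text before Assumption~\ref{assu:bilin}. Second, and more substantively, isolating the dependence on $\chi$ in the blocks $G$, $E$, $F$: one has to expand the Gauss--Newton update $\hat\theta_{j+1} = \hat\theta_j - U(T)^{-1}(U_0(\hat\theta_j-\theta_0) - \xi(T))$ to first order, noting that $U(T)$, $\Pi$, $\hat z^+$ and $\xi(T)$ all depend on $(\Delta\zeta,\Delta\theta)$, and show that the pieces surviving at the optimum that do \emph{not} come with a factor of $\chi$ are precisely cancelled by the first-order optimality conditions — analogous to \eqref{eq:residual} in the linear no-feedback case. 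Quantifying "small enough" for $\chi$ then amounts to comparing these residual-proportional bounds against the contraction gap $\alpha\kappa$, which is the substance of Remark~\ref{rmk:chi}.
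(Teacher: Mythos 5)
Your overall strategy is exactly the paper's: linearize \eqref{eq:bilin_joint}--\eqref{eq:bwd_bilin} at the optimum, read off the Fr\'echet derivative as a block operator $\sm{D & E \\ F & G}$, bound the blocks, apply Lemma~\ref{lma:rad}, and invoke the Ostrowski fixed-point criterion. Two of your intermediate claims, however, do not match what actually comes out of the computation. First, $F$ is \emph{not} $O(\chi)$: a state perturbation $\Delta\zeta$ changes the Gauss--Newton output even with zero residual, because the new parameter is the minimizer for the perturbed initial state; the orthogonal-projection bound gives $\norm{F} \le \sqrt{T/(2\delta)}\,\norm{C}$, a fixed constant. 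Your heuristic (``a pure state perturbation with no residual would leave the first-order optimality condition for $\xi$ untouched'') confuses independence from the old \emph{parameter} estimate with independence from the old \emph{state} estimate. The argument survives only because Lemma~\ref{lma:rad} needs just the product $\norm{E}\norm{F}$, and $\norm{E}=O(\norm{\chi})$ alone delivers the $O(\sqrt{\norm{\chi}})$ cross term in your final display. Second, the $D$ block is not the clean $1-\alpha\kappa+\mathcal{O}(\kappa^2)$ of Theorem~\ref{thm:attractive_lin}: for $\kappa>0$ the forward and backward observer trajectories at the optimum no longer coincide (their discrepancy is $O(\kappa\norm{\chi})$, see \eqref{eq:traj_disc}), and this feeds back into the error dynamics through $\Delta A(\hat\theta-\theta^o)\big(\hat z^+(T-t)-\hat z^-(t)\big)$ --- a term with no analogue in the linear case, where the load $B\Delta\theta$ is trajectory-independent. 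This is the main technical obstacle the paper's proof has to surmount, and it degrades the contraction rate to $1-(\alpha-\tfrac12 M\norm{\chi})\kappa+\mathcal{O}(\kappa^2)$, which is another place where the smallness of $\chi$ is genuinely needed, not just in $G$ and $E$. With these two corrections your plan reproduces the paper's proof.
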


The proof is rather long and technical, and it does not contain any very interesting details. Therefore the proof is presented in Appendix~\ref{app:proof}. Here we only discuss the main ideas and difficulties of the proof. As mentioned, the idea in the proof is to use Lemma~\ref{lma:rad} on the Fr\'echet derivative of $h$ at the optimal point, which we denote by $\sm{D & E \\ F & G}$. The Fr\'echet derivative is formed by linearizing the algorithm equations \eqref{eq:bilin_joint} and \eqref{eq:bwd_bilin} with respect to the optimum by adding infinitesimal perturbations to the initial state and the parameter.

The spectral radius of $D$ is bounded from above by its operator norm. The norm, in turn, can be bounded as in the linear case in the proof of Theorem~\ref{thm:attractive_lin}. Unfortunately, this is not entirely straightforward even though the source estimation case can be seen as a linearization of the bilinear parameter estimation case in the vicinity of an observer trajectory. The reason for this additional complication is that even if the algorithm is run with the optimal initial state and parameter, when $\kappa>0$, the trajectories of the forward observer $\hat z^+$ and the backward observer $\hat z^-$ are not the same, resulting in an additional term $\Delta A(\Delta\theta)\big(\hat z^+(T-t)-\hat z^-(t)\big)$ in the error dynamics \eqref{eq:err_bilin}. In the linear case, the parameter perturbation is affecting the observer dynamics through the term $B\Delta\theta$, which does not depend on the observer trajectory. Therefore the corresponding term vanishes completely, because the contributions in the forward observer and backward observer cancel out each other (compare \eqref{eq:confer} and \eqref{eq:err_bilin}). The difference of the forward and backward observer trajectories depends on the residual term $\chi$, and therefore the operator $D$ has an additional term which is small if $\chi$ is small. Also $E$ is small if $\chi$ is small, and so the spectral radius of $\sm{D & E \\ F & G}$ is smaller than one for small enough residual $\chi$.

\begin{rmk} \label{rmk:chi}
An explicit bound for $\norm{\chi}_{L^2}$ guaranteeing the attractiveness of the fixed point is possible to obtain, but that would be quite restrictive. Therefore, the main contribution of this result is to show that there exists some neighborhood around the attainable set in the space $L^2(0,T;\Ys)$, such that if the given measurement $y$ is in this neighborhood, then the point in the attainable set that is closest to $y$, is an attractive fixed point for the method. However, the neighborhood is probably considerably bigger than what would be obtained by using some ``worst-case" bound. 

In addition, the algorithm can be regularized, for which the simplest way is to decrease the step size by modifying \eqref{eq:bilin_joint} with $\hat\theta_{j+1}=\hat\theta_j-\beta U(T)^{-1}\big(U_0(\hat\theta_j-\theta_0)-\xi(T) \big)$ for some coefficient $\beta \in (0,1]$.
\end{rmk}

\begin{rmk}
If the initial state estimate is bad, it may happen that the method is unstable. One way to improve stability is to start by running the BFN method one or more times without changing the parameter.

It is also possible to change the regularization operator $U_0$ between iterations. It may happen that the cost function \eqref{eq:cost_bilin} has local minima, and using a bigger $U_0$ in the first iterations may help the algorithm to find a minimum that is closer to the prior $\theta_0$.
\end{rmk}

\begin{rmk}
A natural question is that can the feedback then be taken to zero as in Section~\ref{sec:nofb} in the linear case. However, there are then no fixed-point theorems that could be used. In addition, if $\kappa$ is taken to zero in the previous theorem, then also the bound for $\norm{\chi}_{L^2}$ for which the result holds, approaches zero. Therefore it seems that a convergence result in the bilinear case corresponding to Theorem~\ref{thm:nofb} would be very hard to obtain.
\end{rmk}

\section{Examples}

We shall present two examples of which the first is for demonstrating the effect of the feedback in the output error minimization problem. If the model is erroneous, then keeping the observer gain positive may result in better parameter estimate. This example represents a low-dimensional oscillator, where the model error appears as slightly erroneous fundamental frequencies. The second example represents the inverse potential problem for the wave equation and its purpose is to illuminate the iterative state-parameter estimation method developed in this paper.

\subsection{The feedback effect in a simple source estimation problem} \label{sec:fb}

 Consider the source estimation problem for the following system representing an oscillating system, whose dynamics are governed by
 \[
 \begin{cases}
 \dot z= A_0 z+B\theta, \\
 z(0)=z_0, \\
 y=Cz
 \end{cases}
 \]
where $z  \in \mathbb{R}^5 \times \mathbb{R}^5$, and
\begin{align*}
&A_0 =\bm{ 0 & I \\ -\textup{diag}(1.05^2,1.94^2,2.95^2,4.02^2,5.03^2) & 0}, \\ &B=u(t) \bm{ 0 \\ I} \in \mathbb{R}^{10 \times 5},    \qquad C=\bm{0, \, 0,\, 0,\, 0,\, 0,\, 1,\, 1,\, 1,\, 1,\, 1},
\end{align*}
where $u(t)$ is an $\mathbb{R}$-valued signal that is a realization of a Gaussian process obtained as a solution of the auxiliary system
\begin{equation} \label{eq:input}
\begin{cases}
 dr = \sm{0 & 1 \\ -1.9 & -2.7}r \, dt+ \sm{0 \\ 1} dw(t), \\
 r(0) \sim N\left(0,\frac1{59.8} \sm{4 & 2 \\ 2 & 11} \right), \\ 
 u(t)=[4 \ 0]r(t)
\end{cases}
\end{equation}
where $w$ is a standard Brownian motion.
The initial condition for $r$ is chosen so that $r$, and hence also $u$, are stationary processes. This kind of input signal is regular enough (once continuously differentiable) but also a persistently exciting signal. The true parameter value is $\theta=[1,1,1,1,1]^T$.

The observer dynamics are given by
\begin{equation} \label{eq:source_obs}
\dot{\hat z}[\hat\theta]= A \hat z[\hat\theta]+B\hat\theta +\kappa C^*(y-C\hat z[\hat\theta]), \qquad \hat z[\hat\theta](0)=z_0
\end{equation}
where 
\[
A=\bm{ 0 & I \\ -\textup{diag}(1^2,2^2,3^2,4^2,5^2) & 0}
\]
so that the fundamental frequencies of the system are slightly off the true values. This corresponds roughly to a scenario where we model the system with one-dimensional wave equation with Dirichlet boundary conditions (five lowest eigenmodes), but in reality, the vibrating string is slightly inhomogeneous.

\begin{figure}[b]
\vspace{-2mm}
\mbox{\hspace{-15mm} 
\includegraphics[width=15cm]{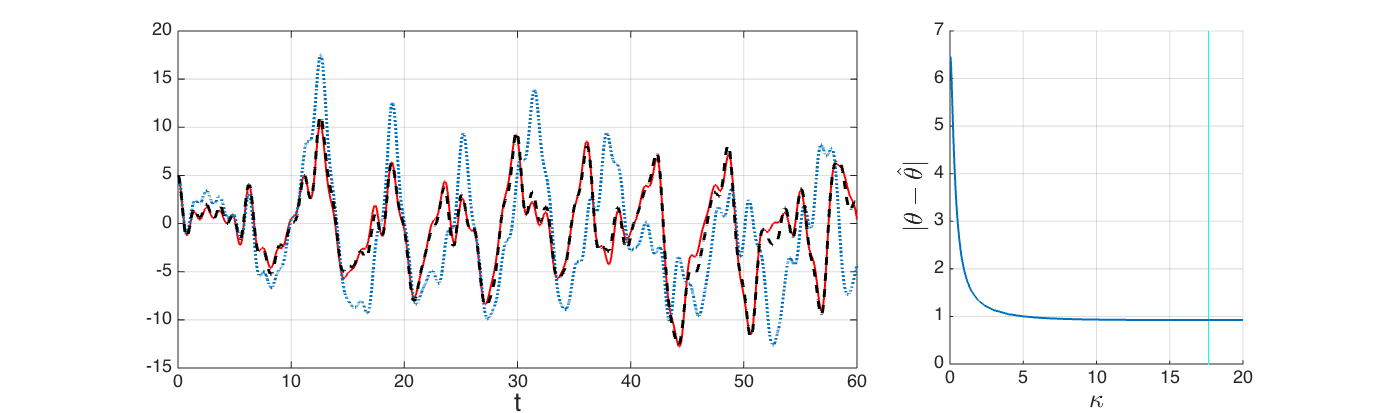}} 
\caption{Left: The true output in solid red line, the optimal observer output with $\kappa=0$ in dotted blue line, and the optimal observer output with $\kappa=1$ in dashed black line. Right: The Euclidian norm $\bnorm{\theta-\hat\theta}$ between the true parameter and the minimizer of  \eqref{eq:sourcemin} with different gain values~$\kappa$.}
\label{fig:outputs}\end{figure}

Consider then the minimization problem
\begin{equation} \label{eq:sourcemin}
\min_{\hat\theta \in \Theta} \bnorm{y-C\hat z[\hat\theta]}_{L^2(0,T;\Ys)}
\end{equation}
where $\hat z[\hat\theta]$ is the observer solution given by \eqref{eq:source_obs}. If the minimization is carried out using the open loop system, that is, with $\kappa=0$ in \eqref{eq:source_obs}, then even with the optimal parameter $\hat\theta$, the output $C\hat z[\hat\theta]$ from \eqref{eq:source_obs} is far from the measurement $y$, as seen on the left in Figure~\ref{fig:outputs} depicting one realization $u$ with $T=60$. Then also the optimal parameter is far from the true parameter. However, when the gain is increased, the output starts following the measurement much better, and also the optimal parameter value tends closer to the true parameter. The right panel of Figure~\ref{fig:outputs} shows the Euclidian distance between the minimizer $\hat\theta$ and the true parameter value $\theta$ for different gain values.
When $\kappa$ is increased from zero, the estimated parameter quickly improves a lot, but when $\kappa$ becomes greater than three, the minimizer does not change considerably. However, the gain value that minimizes the distance between the true and estimated parameters is $\kappa_{min} \approx 17.65$. When the gain grows beyond $\kappa_{min}$, the parameter estimate starts deteriorating slowly.


Altogether twenty realizations for $u$ were simulated. The exact shape of the $\bnorm{\theta-\hat\theta(\kappa)}$-curve seemed to depend somewhat on the realization, but the general behaviour was always similar.
The best parameter estimate was typically obtained with gain values above 10, but in one simulation we had $\kappa_{min} \approx 2.3$. In eight simulations, the norm between the true and estimated parameter seemed to reduce monotonically. Simulations were carried out up to $\kappa=200$ in these cases. However, it should be noted that the error never reduced significantly above $\kappa=5$. Interestingly, the gain value maximizing the  distance between the imaginary axis and the eigenvalues of $A-\kappa C^*C$ was as small as $\kappa \approx 0.56$ and the gain at which the first eigenmode became critically damped was $\kappa \approx 1.022$.



\begin{figure}[t]
\mbox{\hspace{-14mm} 
\includegraphics[width=15cm]{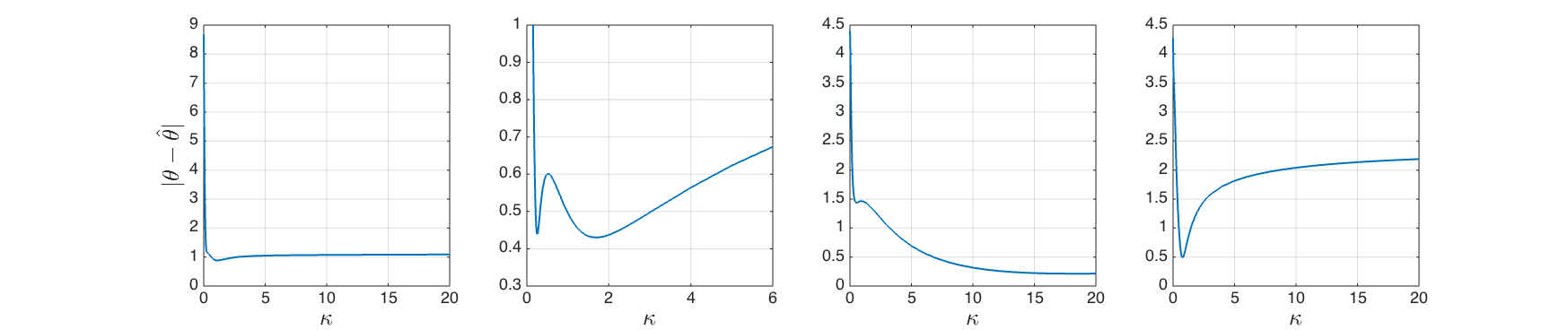}} 
\caption{Different types of behaviour for the parameter estimation error in the presence of output noise.}
\label{fig:noisy}\end{figure}

Some simulations were carried out with noisy output. The output noise process is a realization of the Ohrnstein--Uhlenbeck process obtained as the solution of the stochastic differential equation
\begin{equation} \label{eq:OU}
dv(t)=-\frac{25}{2}v(t) \, dt+\frac{5}{2}dw(t), \qquad v(0) \sim N(0,1/4),
\end{equation}
where $w$ is a standard Brownian motion. The initial condition is chosen so that the noise process is stationary. With noisy output, there was more variation in the behaviour of the parameter estimate.
Figure~\ref{fig:noisy} shows the difference between the true and estimated parameter for different gain values in four different scenarios. The leftmost picture shows a very nice case, where the parameter quickly improves a lot when the observer gain is increased from zero. In addition, the error does not really even grow when the gain is further increased. The second picture shows a case with multiple local minima and an error that starts increasing rather quickly when the gain is increased more. The third picture shows again a fairly nice case, but there the best parameter estimate is obtained with quite high (>10) gain values. In contrast, the fourth picture shows a case where the estimate is good on a narrow gain region (the error is below one between $\kappa \in (0.47,1.44)$). Let us remark that there are no exact rules for finding a suitable observer gain to be used in parameter estimation. However, this is a problem that always emerges in connection of observers. In ten simulations with different input and noise realizations, the gain value for which the state observer performed optimally (minimizing $\norm{z-\hat z}_{L^2(0,T;\Xs)}$) was between 0.8 and 1.2.

\subsection{Wave equation inverse potential problem} \label{sec:wave}
 
In the second example, we do not concentrate on the effect of the feedback, but more on the performance of the presented algorithm applied to a classical inverse potential problem for the wave equation. This problem is widely studied in the PDE analysis community, see, e.g.,  \cite{Baudouin} and \cite{Imanuvilov}. The dynamics of the considered system are governed by the one-dimensional wave equation with potential,
\begin{align*}
&\frac{\partial^2}{\partial t^2}u(t,x)=\frac{\partial^2}{\partial x^2}u(t,x)-\theta(x)u(t,x)+\tilde f(t,x), \\
& u(t,0)=u(t,1)=0
\end{align*}
where $\tilde f \in L^2\big(0,T;L^2(0,1)\big)$ is a known load term and the potential term $\theta$ is the parameter to be identified. The system is written as a first order system through introduction of the augmented state vector $z=[u \ u_t]^T \in \Xs$, where the state space is $\Xs=H_0^1[0,1] \times L^2(0,1)$ equipped with norm
\[
\norm{\bm{z_1 \\ z_2}}_{\Xs}^2:=\int_0^1 \bigg( \!\! \left(\frac{\partial z_1}{\partial x} \right)^{\! 2}+\theta z_1^2+z_2^2 \bigg)dx.
\]
For the augmented state vector $z$ we have the standard dynamics equations
\[
\dot z(t)=A(\theta)z(t)+f(t)
\]
where $f(t)=[0 \ \tilde f(t)]^T \in L^2(0,T;\Xs)$ and $A(\theta)=\bm{ 0 & I \\ \Delta - \theta & 0}$ where $\theta$ is regarded as a multiplication operator. The measurement obtained from the system consists of a partial velocity field measurement $u_t(t,x)$ for $x \in [0,0.1]$ and the averages
\[
\int_{0.05+0.05j}^{0.1+0.05j} u_t(t,x)dx
\] 
for $j=1,...,18$. Thus the output space is $\Ys=L^2(0,0.1) \times \mathbb{R}^{18}$ and the output operator $C:\Xs \to \Ys$ is bounded.

The load function is given by $\tilde f(t,x)=f_1(t)b_1(x)+f_2(t)b_2(x)+f_3(t)b_3(x)$ where $b_1(x)=(1-x)\sin(\pi x)$, $b_2(x)=7x^2(1-x)$, and $b_3(x)=\frac{\sin^2(6\pi x)}{x}$, and the time components $f_j$ for $j=1,2,3$ are three different realizations of the Gaussian process that was used also in the first example, described in \eqref{eq:input}. The output is given by
\[
y(t)=Cz(t)+\bm{\chi_1 \\ \chi_2}
\]
where $\chi_1$ is a noise process taking values in $L^2(0,0.1)$ and it is given by
\begin{equation} \label{eq:pot_noise}
\chi_1(t,x)=\sum_{j=1}^{\infty}\frac1{j}\sin\left( \frac{(2j-1)\pi}{0.2}x \right)v_j(t)
\end{equation}
where $v_j$'s are independent realizations of the Ohrnstein--Uhlenbeck process \eqref{eq:OU} used also in the previous example, multiplied by 0.1. Also $\chi_2$ consists of 18 independent realizations of the same process multiplied by 0.1.

Let us move on to the iterative joint state and parameter estimation algorithm. The operator $C^*C$ in the observer is given by
\[
C^*C \bm{g \\ h} = \bm{0 \\ h \mathbbm{1}_{[0,0.1]}}+\sum_{k=1}^{18}\bm{0 \\ \mathbbm{1}_{J_k}}\int_{J_k} h(t,x)dx
\]
where $\mathbbm{1}$ is the characteristic function of an indicated set and $J_k=[0.05+0.05k,0.1+0.05k]$. The time-dependent sensitivity operator $\Pi(t):\Theta \to \Xs$ is defined for $\xi \in \Theta$ as the solution at time $t$ of the equation
\[
\frac{d}{dt}\big(\Pi\xi \big)(t)=(A(\theta)-\kappa C^*C)\big(\Pi\xi\big)(t)-\bm{0 & 0 \\ \xi & 0}\hat z(t), \qquad \big(\Pi\xi\big)(0)=0
\]
where $\theta$ is the parameter at which the sensitivity is computed (that is used to compute the estimate $\hat z$) and $\xi$ is understood as a multiplication operator.

\begin{table}[t]
  \centering
\vspace{-2mm}
\caption{Estimation errors of the parameter ($L^1(0,1)$-norm), initial displacement ($L^2(0,1)$-norm of the $x$-derivative), and initial velocity ($L^2(0,1)$-norm).}
    \vspace{2mm}
  \begin{tabular}{clll}
    \hline
    \hline
 Iteration & Param. & Displ. & Vel.  \\
 & $\cdot 10^{-1}$ & $\cdot 10^{-3}$ & $\cdot 10^{-3}$ \\ 
 \hline
1 & 2.988 & 15.511 & 9.890 \\
2 & 1.588 & 6.463 & 5.344 \\
3 & 1.551 & 6.602 & 5.726 \\
4 & 1.550 & 6.587 & 5.780
 \\
   \hline
    \hline
  \end{tabular}
  \label{tab:convergence}
\end{table}

The algorithm was tested with a potential term $\theta$ that is zero at $x < 0.4$ or $x> 0.85$ and $\theta(x)=2$ for $x \in [0.45,0.8]$ with linear slopes at $[0.4,0.45]$ and $[0.8,0.85]$ (see Figure~\ref{fig:potential}). The initial state of the system is 
\[
\begin{cases}
u(0,x)=0.5x^{0.8}\sin(\pi x)+\sin(4\pi x), \\
u_t(0,x)=-8x(1-x)+1.6\sin(2\pi x),
\end{cases}
\]
and the initial guesses for both the parameter and the initial state are zero. Also the parameter prior is zero, that is, $\theta_0=0$ in the algorithm equations \eqref{eq:bilin_joint}. 
In this simulation the observer gain is $\kappa=2$, and the Tikhonov regularization operator is $U_0=-6\cdot 10^{-5}\Delta+1.5\cdot 10^{-5}I$ so that
\[
\ip{\xi,U_0\xi}_{\Theta}=6\cdot 10^{-5} \norm{d_x\xi}_{L^2(0,1)}^2+1.5\cdot 10^{-5}\norm{\xi}_{L^2(0,1)}^2.
\]
This is a sort of a \emph{smoothness prior}, meaning that the regularization term penalizes big jumps and oscillations in the solution.

Obviously, a discretization of both the state and the parameter space is required in a practical implementation. In this example, we use the same discretization for both, namely an equispaced finite element mesh with discretization interval 0.01 and piecewise linear basis functions (so called hat functions). In the discretized version, the output part $L^2(0,0.1)$ consists of ten first components of the discretized velocity vector. Therefore in the discretized version of the output noise $\chi_1$ given in \eqref{eq:pot_noise} we take into account ten first terms of the sum.

The algorithm converges quickly, as can be seen from Table~\ref{tab:convergence}. In the first iteration, the erroneus initial state causes an error in the parameter estimate. However, the initial state estimate converges more or less in the first iteration (recall that the first parameter estimate is used already in the first backward phase). After that, also the parameter estimate converges fast, and so no significant improvement happens after two iterations. The initial state and the parameter estimates after two iterations are shown in Figure~\ref{fig:potential}. Out of curiosity, we also took $\kappa$  slowly to zero between iterations, even though our theoretical results do not cover this strategy in the bilinear estimation problem. The state and parameter estimates seemed to converge, and at the limit, the errors were (cf. Table~\ref{tab:convergence}) 1.464$\cdot 10^{-1}$ for the parameter, 3.118$\cdot 10^{-3}$ for the initial displacement, and 3.539$\cdot 10^{-3}$ for the initial velocity. In particular, the initial state estimate has improved compared to the iterations with $\kappa=2$. However, one should keep in mind that in this example, the observer dynamics did not contain any modeling errors.

\begin{figure}[t]
\mbox{\hspace{-14mm}
\includegraphics[width=15cm]{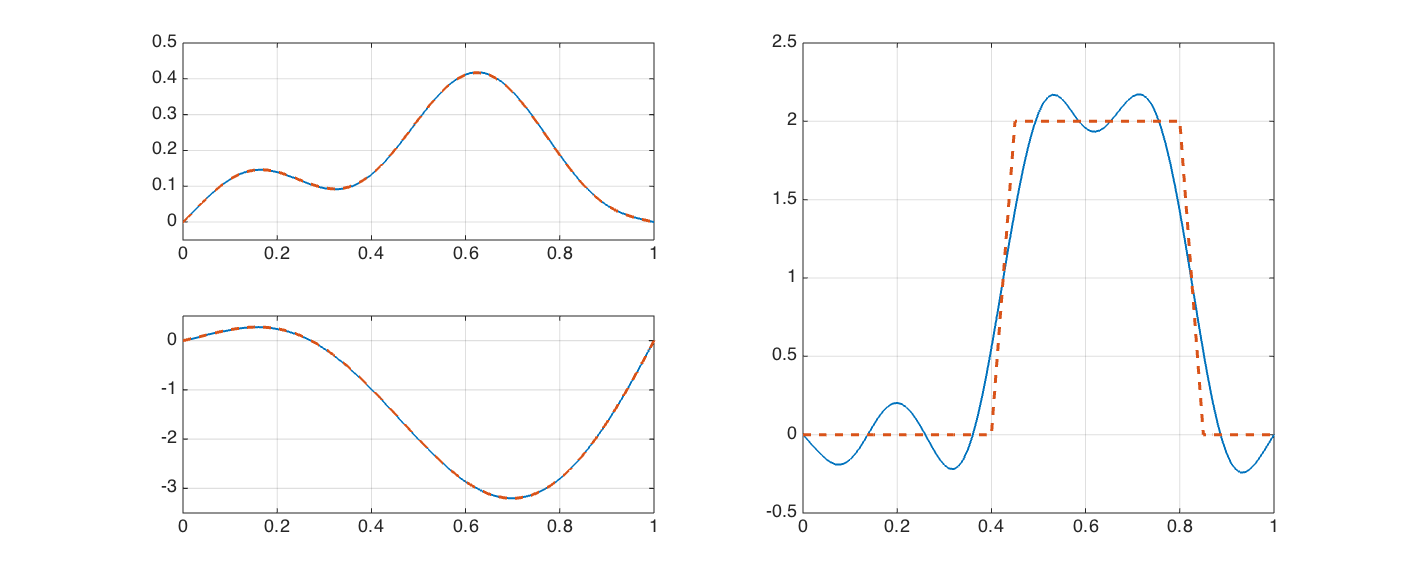}} 
\caption{The initial displacement estimate (top left) and the initial velocity estimate (bottom left) together with the potential estimate (right) after two iterations. The estimates are shown with the solid blue lines and the true ones are the dashed red lines. The initial state estimate and the true initial state can barely be distinguished.}
\label{fig:potential}\end{figure}







\appendix

\section{} \label{app:proof}

\begin{proof}[Proof of Theorem \ref{thm:attractive_bilin}]

The result follows by applying Lemma \ref{lma:rad} to the Fr\'echet derivative of $h$ at the optimal point, denoted by $\sm{D & E \\ F & G}$ in accordance with the notation used in Lemma \ref{lma:rad}. Let us compute and bound each block one at a time, by linearizing the algorithm equations \eqref{eq:bilin_joint} and \eqref{eq:bwd_bilin}. The linearization is carried out by considering infinitesimal perturbations in turn to the optimal parameter, and optimal initial state. That is, we consider $h(\zeta^o+\Delta\zeta,\theta^o)$ and $h(\zeta^o,\theta^o+\Delta\theta)$ and we shall only track the first-order components with respect to $\Delta\zeta$ and $\Delta\theta$.

The fact that $A(\theta)$ is skew-adjoint will be used several times, but only for the optimal parameter $\theta=\theta^o$. Therefore the inner product and norm in $\Xs$ are the ones corresponding to $\theta^o$.

{\bf Block $D$:}
Set $\zeta=\zeta^o+\Delta \zeta$ where $\Delta\zeta \in \Xs$ is an infinitesimal vector, and $\xi=\theta^o$. The derivative $D$ can be obtained by a modification of the proof of Theorem~\ref{thm:attractive_lin}.
Consider the error terms $\varepsilon^+:=\hat z_j^+-z^o+\Pi(\hat\theta-\theta^o)$ where $\hat\theta$ is given by the mapping $h(\zeta^o+\Delta\zeta,\theta^o)$, and $\varepsilon^-(t):=\hat z_j^-(t)-z^o(T-t)$ (for some index $j$ that will be omitted in the proof). The dynamics of these error terms are given by
\[
\begin{cases}
\dot{\varepsilon}^+=(A(\theta^o)-\kappa C^*C)\varepsilon^++\Delta A(\hat \theta-\theta^o)\hat z^+, \\ \varepsilon^+(0)=\Delta\zeta,
\end{cases}
\]
and
\[
\begin{cases}
\dot\varepsilon^-(t)=(-A(\theta^o)-\kappa C^*C)\varepsilon^-(t)-\Delta A(\hat\theta-\theta^o)\hat z^-(t)+2\kappa C^*\chi(T-t), \\
\varepsilon^-(0)=\varepsilon^+(T).
\end{cases}
\]
Following the proof of Theorem~\ref{thm:attractive_lin}, the backward equation is written in the form\begin{align} \label{eq:err_bilin}
\dot\varepsilon^-(t)=&(-A(\theta^o)+\kappa C^*C)\varepsilon^-(t)-\Delta A(\hat\theta-\theta^o)\hat z^+(T-t)  \\ \nonumber & -2\kappa C^*C \varepsilon^-(t)+\Delta A(\hat\theta-\theta^o)\big(\hat z^+(T-t)-\hat z^-(t)\big)+2\kappa C^*\chi(T-t),
\end{align}
where the first two terms comprise the time-inverted forward equation. Again by the semigroup perturbation formula, it holds that
\begin{align} \label{eq:eT}
\varepsilon^-(T) & =\Delta\zeta -2\kappa \int_0^T e^{-(A(\theta^o)-\kappa C^*C)(T-s)}C^* \big(C\varepsilon^-(s)-\chi(T-s)\big) ds \! \\ \nonumber
& \ \ + \! \int_0^T \! e^{-(A(\theta^o)-\kappa C^*C)(T-s)}\Delta A(\hat\theta-\theta^o)\big(\hat z^+(T-s)-\hat z^-(s)\big)ds. \!
\end{align} 
The contribution of $\chi$ vanishes from the first integral because of the optimality condition.
Compared with \eqref{eq:confer} in the proof of Theorem~\ref{thm:attractive_lin}, here is one additional term arising from the fact that the ``load term" in the error dynamics equations depends on the trajectory $\hat z^{\pm}$. 
It will be seen in a moment that $\hat\theta-\theta^o=\mathcal{O}(\norm{\Delta\zeta})$. Therefore, in the additional second integral in \eqref{eq:eT} it suffices to take into account only the zeroth order part (w.r.t. $\Delta\zeta$) of the difference $\hat z^+(T-t)-\hat z^-(t)$, meaning that we can consider the trajectories corresponding to the optimal point $(\zeta^o,\theta^o)$. It holds that
\[
\frac{d}{dt}\left( \hat z^+(T-t)-\hat z^-(t) \right)=\left( -A(\theta^o)-\kappa C^*C \right)\left( \hat z^+(T-t)-\hat z^-(t) \right)-2\kappa C^*\chi(T-t).
\]
The condition at $t=0$ is $\hat z^-(0)=\hat z^+(T)$, since $h(\zeta^o,\theta^o)=(\zeta^o,\theta^o)$ and so there is no contribution from the term $\Pi_j(T)(\hat\theta_{j+1}-\hat\theta_j)$ in \eqref{eq:bwd_bilin}. From the above equation, it is clear that when $\kappa=0$,  this additional term is zero, that is, $\hat z^+(T-t)=\hat z^-(t)$. For the proof, we need the derivative $\frac{d}{d\kappa}\varepsilon^-(T)\big|_{\kappa=0}$. The derivative of the first integral term in \eqref{eq:eT} is as in Theorem~\ref{thm:attractive_lin}, but for the derivative of the second integral, we need the derivative of the trajectory discrepancy. So denote $\hat\varepsilon(t):=\hat z^+(T-t)-\hat z^-(t)$. Using the semigroup perturbation formula yields
\[
\hat\varepsilon(t)=-\kappa \int_0^t e^{-A(\theta^o)(t-s)} C^* \big( C \hat\varepsilon(s)+2\chi(T-s)\big)ds
\]
from which it directly follows (recalling that $\hat\varepsilon=0$ if $\kappa=0$)
\begin{equation} \label{eq:traj_disc}
\frac{d}{d\kappa} \hat\varepsilon(t) \Big|_{\kappa=0}=-2\int_0^t e^{-A(\theta^o)(t-s)}C^*\chi(T-s)ds.
\end{equation}

Using \eqref{eq:eT} and $-A(\theta^o)=A(\theta^o)^*$ in $\Xs[\theta^o]$ as in the proof of Theorem~\ref{thm:attractive_lin}, we conclude
\begin{align*}
\frac{d}{d\kappa} \frac12 \norm{\varepsilon^-(T)}_{\Xs[\theta^o]}^2 \Big|_{\kappa=0}& = -2 \int_0^T \ip{ Ce^{A(\theta^o)(T-s)}\Delta\zeta,C\varepsilon^-(s) }_{\! \Ys}ds+e
\end{align*}
where $e$ is the additional error term caused by the $\kappa$-derivative of the last term in \eqref{eq:eT}. The effect of the first term can now be computed exactly as in the proof of Theorem~\ref{thm:attractive_lin}, but because there everything was computed without the load terms, one should note that there we had $z^o=0$ and $\theta^o=0$. Thus one should replace $\hat z^{\pm}$ by $\varepsilon^{\pm}$ and $\hat\theta$ by $\hat\theta-\theta^o$ in the proof.

The additional error term $e$ is given by
\begin{align*}
e=&\ip{\Delta\zeta,\int_0^T e^{-A(\theta^o)(T-s)}\Delta A(\hat\theta-\theta^o)\frac{d\hat\varepsilon}{d\kappa}(s)ds} \\
=& \int_0^T \ip{e^{A(\theta^o)(T-s)}\Delta\zeta, \Delta A(\hat\theta-\theta^o)\frac{d\hat\varepsilon}{d\kappa}(s)}ds.
\end{align*}
By Cauchy--Schwartz inequality (see also A3 in Assumption~\ref{assu:bilin}), it holds that
\begin{align*}
|e| &\le \norm{e^{A(\theta^o)(T-\cdot)}\Delta\zeta}_{L^2(0,T;\Xs)}\norm{\Delta A(\hat\theta-\theta^o)\frac{d\hat\varepsilon}{d\kappa}}_{L^2(0,T;\Xs)} \\
& \le \sqrt{T} \norm{\Delta\zeta}_{\Xs[\theta^o]} \norm{\Delta A}\bnorm{\hat\theta-\theta^o}\norm{\frac{d\hat\varepsilon}{d\kappa}}_{L^2(0,T;\Xs)}.
\end{align*}
Applying Young's inequality for convolutions to \eqref{eq:traj_disc} gives $\norm{\frac{d\hat\varepsilon}{d\kappa}}_{L^2(0,T;\Xs)} \le 2T\norm{C}\norm{\chi}_{L^2(0,T;\Ys)}$.
Since $\bm{C\Pi \\ I}(\hat\theta-\theta^o)$ is the orthogonal projection of $\bm{Ce^{A(\theta^o)\cdot}\Delta\zeta \\ 0}$ onto the subspace $\bm{C\Pi \\ I}\Theta \subset L^2(0,T;\Ys)\times \Theta$, it holds that
\begin{equation} \label{eq:parest}
\bnorm{\hat\theta-\theta^o}_{\Theta} \le \sqrt{\frac{T}{2\delta}}\norm{C} \norm{\Delta\zeta}_{\Xs}.
\end{equation}
Finally we can deduce
\[
|e| \le \sqrt{2/\delta}T^2\norm{C}^2\norm{\Delta A}\norm{\chi}_{L^2(0,T;\Ys)} \norm{\Delta\zeta}^2=:M\norm{\chi}_{L^2(0,T;\Ys)} \norm{\Delta\zeta}^2, 
\]
and further
\[
\norm{\varepsilon^-(T)} \le \left( 1-\left(\alpha-\frac12 M\norm{\chi}_{L^2(0,T;\Ys)}\right)\kappa+\mathcal{O}(\kappa^2) \right) \norm{\Delta\zeta}.
\]
This inequality gives a bound for the norm of the first block $D$ in the Fr\'echet derivative, and for $\norm{\chi}_{L^2(0,T;\Ys)}$ sufficiently small, the norm --- and hence also the spectral radius --- is smaller than one. 

{\bf Block $F$:}
From \eqref{eq:parest}, it follows that $\norm{F} \le \sqrt{\frac{T}{2\delta}}\norm{C}$.

{\bf Block $G$:}
Set $\zeta=\zeta^o$ and $\xi=\theta^o+\Delta\theta$ where $\Delta\theta \in \Theta$ is an infinitesimal vector. To keep notation consistent with \eqref{eq:bilin_joint} and \eqref{eq:bwd_bilin}, denote $(\hat z^-(T),\hat\theta)=h(\zeta,\xi)$. Denoting $\varepsilon^+:=z^o-\hat z^+ -\Pi(\theta^o-\xi)=z^o-\hat z^++\Pi\Delta\theta$, it is obtained directly from \eqref{eq:bilin_joint}, that 
\begin{align*}
\hat\theta&=\xi-U(T)^{-1}\left( U_0(\xi-\theta_0)-\int_0^T\big(C\Pi\big)^*(y-C\hat z^+)d\tau \right)     \\ & = \theta^o+\Delta\theta-U(T)^{-1}\left( U_0(\theta^o+\Delta\theta-\theta_0)- \! \int_0^T \!\! \big(C\Pi\big)^*(C\varepsilon^+-C\Pi\Delta\theta+\chi)d\tau \right) \\ &= \theta^o-U(T)^{-1}\left( U_0(\theta^o-\theta_0)- \int_0^T  \big(C\Pi\big)^*(C\varepsilon^++\chi) d\tau \right) \\  
&=\theta^o+U(T)^{-1} \int_0^T \big( \big(C\Pi\big)^*C\varepsilon^+ +\big(\big(C\Pi\big)^*-\big(C\Pi^o\big)^*\big)\chi \big) d\tau
\end{align*}
where $\Pi^o$ is the sensitivity operator corresponding to the optimal parameter and optimal trajectory, and it is given by $\dot \Pi^o=(A(\theta^o)-\kappa C^*C)\Pi^o+\Lambda z^o$, $\Pi^o(0)=0$. The last equality in the above computation follows from the property
 $U_0(\theta^o-\theta_0)-\int_0^T \big(C\Pi^{o*}\big)\chi d\tau=0$ which holds because it is the gradient  of the cost function with respect to $\xi$ at the optimum. For the error term $\varepsilon^+$ it holds that
\[
\dot\varepsilon^+=(A(\theta^o)-\kappa C^*C)\varepsilon^++\Delta A(\Delta\theta)\Pi \Delta\theta, \qquad \varepsilon^+(0)=0
\]
and therefore $\varepsilon^+ =\mathcal{O}(\norm{\Delta\theta}^2)$ and so it can be neglected as we are only interested in the linear terms with respect to small variations $\Delta\theta$. The second term in the integrand in the above expression for $\hat\theta$ is the usual first order Gauss--Newton step error that depends on the optimal residual $\chi$, and the curvature of the attainable set through $\Pi-\Pi^o$. For this term we have
\begin{align*}
\frac{d}{dt}(\Pi-\Pi^o) &=(A(\theta)-\kappa C^*C)\Pi-(A(\theta^o)-\kappa C^*C)\Pi^o + \Lambda (\hat z^+-z^o) \\
&=(A(\theta^o)-\kappa C^*C)(\Pi-\Pi^o) +\Delta A(\Delta\theta)\Pi + \Lambda (\hat z^+-z^o)
\end{align*}
which has a first order dependence on $\Delta\theta$, and so we deduce that $\bnorm{\hat\theta-\theta^o} \le M \norm{\chi}_{L^2(0,T;\Ys)}\norm{\Delta\theta}$ for some $M>0$, implying $\norm{G} \le  M \norm{\chi}_{L^2(0,T;\Ys)}$.

{\bf Block $E$:}
Finally we consider the effect of the infinitesimal perturbation in the parameter on the initial state component of $h$. Define $e^+:=z^o-\hat z^+-\Pi(\hat\theta-\xi)$ and $e^-(t):=z^o(T-t)-\hat z^-(t)$. The forward error satisfies
\[
\dot e^+=(A(\theta^o)-\kappa C^*C)e^++\Delta A(\theta^o-\hat\theta)\hat z^+-\Delta A(\Delta\theta)\Pi(\hat\theta-\xi)
\]
with $e^+(0)=0$. Now $\hat\theta-\xi=\hat\theta-\theta^o-\Delta\theta$ and so the latter term is of second order and so $e^+(T)=\mathcal{O}\big( \bnorm{\hat\theta-\theta^o}\big)+\mathcal{O}\big( \norm{\Delta\theta}^2 \big)$. The backward error satisfies
\[
\dot e^-(t)=(-A(\theta^o)-\kappa C^*C)e^-(t)-2\kappa C^*\chi(T-t)+\Delta A(\hat\theta-\theta^o)
\]
with $e^-(0)=e^+(T)$. The contribution of the residual term at $t=T$ is
\[
\int_0^T e^{(-A(\theta^o)-\kappa C^*C)(T-s)}C^*\chi(T-s)ds=\int_0^T e^{(A(\theta^o)^*-\kappa C^*C)s}C^*\chi(s)ds=0
\]
because of the optimality condition. Therefore it holds that $e^-(T)=\mathcal{O}\big( \bnorm{\hat\theta-\theta^o}\big)+\mathcal{O}\big( \norm{\Delta\theta}^2 \big)$ and so also $\norm{E}=\mathcal{O}(\norm{\chi}_{L^2})$.

{\bf Conclusion:} By Lemma~\ref{lma:rad}, it holds that
\[
\rho\left( \sm{D & E \\ F & G}\right) \le \left( 1-\left(\alpha-M_1\norm{\chi}_{L^2(0,T;\Ys)}\right)\kappa+\mathcal{O}(\kappa^2) \right) +M_2\sqrt{\norm{\chi}_{L^2(0,T;\Ys)}}
\] 
where $\alpha$ is given in Theorem~\ref{thm:attractive_lin} and $M_1$ and $M_2$ are positive constants that are collected from the proof. For small gains $\kappa$, the spectral radius is smaller than one if $\norm{\chi}_{L^2(0,T;\Ys)}$ is small enough. 
\end{proof}

\bibliographystyle{plain}
\bibliography{bibli}

\end{document}